\DeclareTextFontCommand{\bfemph}{\bfseries\em}
\newcommand{\term}{\bfemph}
\numberwithin{equation}{section}
\newtheorem{theorem}{Theorem}[section]
\newtheorem{proposition}[theorem]{Proposition}
\newtheorem{corollary}[theorem]{Corollary}
\newtheorem{lemma}[theorem]{Lemma}
\newtheorem{theoremalphabetic}{Theorem}
\theoremstyle{definition}
\newtheorem{definition}[theorem]{Definition}
\newtheorem{example}[theorem]{Example}
\newtheorem{remark}[theorem]{Remark}
\crefname{theoremalphabetic}{Theorem}{Theorems}
\newcommand{\mathdash}{\relbar\mkern-9mu\relbar}
\newcommand{\R}{\mathbb{R}}
\newcommand{\Q}{\mathbb{Q}}
\newcommand{\Z}{\mathbb{Z}}
\newcommand{\mO}{\mathcal{O}}
\newcommand{\mS}{\mathcal{S}}
\newcommand{\mD}{\mathcal{D}}
\newcommand{\mM}{\mathcal{M}}
\newcommand{\mC}{\mathcal{C}}
\newcommand{\mE}{\mathcal{E}}
\newcommand{\mG}{\mathcal{G}}
\newcommand{\wQ}{\widetilde{Q}}
\newcommand{\oP}{\bar{P}}
\newcommand{\bs}{\bar{s}}
\newcommand{\bl}{\bar{\ell}}
\DeclareMathOperator{\im}{Im}
\DeclareMathOperator{\sign}{sign}
\DeclareMathOperator{\diag}{diag}
\DeclareMathOperator{\id}{id}
\newcommand{\Addresses}{{
  \bigskip
  \footnotesize
  \textsc{Carles Checa, University of Copenhagen}\par\nopagebreak
  \textit{E-mail address}, \texttt{ccn@math.ku.dk}

  \medskip

 \textsc{Elisenda Feliu, University of Copenhagen}\par\nopagebreak
  \textit{E-mail address},  \texttt{efeliu@math.ku.dk}
}}
\begin{document}

 \fancyhead[CO]{\scriptsize C. Checa and  E. Feliu}
 \fancyhead[CE]{\scriptsize C. Checa and E. Feliu}
\fancyhead[LE]{\scriptsize\thepage}
\fancyhead[RO]{\scriptsize\thepage}
\fancyhead[LO]{}
\fancyhead[RE]{}
\fancyfoot[R]{}
\fancyfoot[C]{}
\fancyfoot[L]{}
\thispagestyle{empty}

\title[Multiple positive zeros]{An effective criterion for multiple positive zeros of vertically parametrized polynomial systems}

\author{Carles Checa and Elisenda Feliu}

\date{\today}


\begin{abstract}
We present an effective criterion for determining whether a (augmented) vertically parametrized polynomial system admits multiple positive zeros for some choice of parameter values. Our method builds on previous algorithms from chemical reaction network theory and reduces the problem to checking the feasibility of  linear systems of equalities and inequalities. 
Our criterion provides a necessary condition for the existence of multiple positive zeros that applies to any augmented vertically parametrized polynomial system, and we show that when the kernel of the coefficient matrix of the system displays a particular sparsity structure, this condition also becomes sufficient. This provides  a full characterization of the existence of multiple zeros for this type of systems.
\end{abstract}


\maketitle

\section*{Introduction}

Polynomial systems of equations that arise in applications often have  fixed support and their coefficients are functions of some parameters. \term{Vertically parametrized}  systems (vertical systems for short) \cite{genericrootcounts,FELIU2025630} constitute an example of this and arise naturally when describing the steady states of chemical reaction networks \cite{dickenstein2016biochemical, feinberg-book,feliu2025genericgeometrysteadystate}, finding the critical points of hypersurfaces  \cite{gkz1994}, and in geometric modelling \cite{craciunsottile}. These systems are defined by   Laurent polynomials of the  form
\begin{equation*}
    C \big( a \star x^M \big) \ \in \R[a,x^{\pm}]^{s}
\end{equation*}
with parameters $a=(a_1,\dots,a_{m})$ and variables $x=(x_1,\dots,x_n)$. Here $C \in \R^{s \times m}$ has full row rank, the columns of $M \in \mathbb{Z}^{n \times m}$ encode the exponents of the monomials,  and $a \star x^M$ indicates that 
the $i$-th monomial  is scaled by $a_i$. A particular feature of vertical systems is that each parameter always accompanies the same monomial. 
For example, 
\begin{align*}
    \big( a_1x_1- 2 a_2x_1x_2^2 + a_3 x_2^2,\ a_2x_1x_2^2 - (a_3+a_5)x_2^2  , \ -a_4 x_1 + 3a_5 x_2^2\big)
\end{align*}
is a vertical  system with 
\[ C=\begin{pmatrix}
    1 & -2 & 1 & 0 & 0 \\ 0 & 1 & -1 & 0& -1 \\ 0 & 0 & 0& -1 & 3
\end{pmatrix}\quad \text{ and } \quad 
M=\begin{pmatrix}
    1 & 1 & 0 & 1 & 0 \\ 0 & 2 & 2 & 0 & 2
\end{pmatrix}\, . \]
Sparse (or freely parametrized) polynomial systems, in which the monomials in each entry are fixed but  the coefficient of each monomial can take any value, are a particular instance of vertical systems, see \cite{FELIU2025630}. Vertical systems appear under the name of \emph{engineered complete intersections} in \cite{Esterov:slight,Esterov:eliminate} and form an instance of a vector-value Laurent polynomial equation as in \cite{vector-valued}. 

Motivated by their origin in   reaction network theory \cite{feinberg-book}, we include 
additional   linear entries and consider \term{augmented vertically parametrized}  systems,
\[ 
\big( C\big( a \star x^M\big) , Lx - b \big) \in \R[a,b,x^{\pm}]^{d}  \, ,
\]
where $L\in \R^{(d-s)\times n}$ has full rank and  $b=(b_1,\dots,b_{d-s})$ are additional parameters. 

The goal of this work is to provide an efficient algorithm to decide whether an augmented vertical  system 
\term{admits multiple positive zeros} in the sense that   there 
exist  parameter values $(a,b)\in \R^{m}_{>0} \times \R^{d-s}$ and distinct $x,y \in \mathbb{R}_{>0}^n$ such that 
\[ C(a \star x^M) = C(a \star y^M) =0 \qquad  Lx = Ly = b\, .\]

Our approach builds on the \emph{higher deficiency algorithm} in the PhD thesis of Haixia Ji \cite{Ji2011UniquenessOE}, which was developed for the polynomial systems describing the steady states of a chemical reaction network. In particular, the exposition in \cite{Ji2011UniquenessOE} is tight to the language, properties, and graphical structure of reaction networks. 
The algorithm extended  previous work by the group of Feinberg \cite{Feinberg1988,advancedellison}, and  is implemented, in closed source code, in the CRNT toolbox for Windows systems \cite{crnttoolbox}. 
Similar ideas and extensions have appeared in \cite{ conradiflockerzimulti,hernandez2020fundamentaldecompositionsmultistationaritypowerlaw,Hernandez_2019}.

In this work, we extend the ideas behind the higher deficiency algorithm to arbitrary augmented vertical systems, while   we also substantially simplify the exposition and clarify the  underlying algebraic formalism. In doing so, we are   able to relax some of the assumptions of \cite{Ji2011UniquenessOE} and therefore, even in the restricted setting of reaction networks, more systems can be studied with the algorithm. 
The crucial point is that, under certain hypotheses, deciding upon the existence of multiple positive zeros can be reduced to checking the feasibility of a linear system of equalities and inequalities. This reformulation  allows the problem to be efficiently solved with linear programming, and hence bypasses the use of all-purpose methods building on Cylindrical Algebraic Decomposition and Gr\"obner basis computations.

To understand the main idea of the algorithm, we explain the  basic scenario. 
Assume that,  in row reduced echelon form, the matrix $C$ is of the form 
$
C = \begin{pmatrix} \id_{s} & -\oP 
\end{pmatrix}  
$
for $\oP \in \mathbb{R}^{s\times \ell}$. 
Given $\rho\in \R^{m}$, we consider the matrix   $A_{\rho}$  defined by 
\[ 
(A_{\rho})_{ij} =  \oP_{ij} 
     (e^{\rho_i} -  e^{\rho_{s+ j}}),  \qquad i \in \{1,\dots,s\}, \  j \in \{1,\dots,\ell\}\,, \]
     and 
the \textit{characteristic system} 
\[
A_{\rho}\mu = 0\,, \qquad \oP\mu \in \R_{>0}^{s}\,, \qquad \mu \in \R_{>0}^{\ell}\, .
\]
Let $\mO_L\subseteq \R^{n}$ be the union of all orthants that $\ker(L)$ intersects nontrivially. 

The first ingredient towards the algorithm is that the characteristic system  being feasible for suitable $\rho$ completely characterizes whether the system admits multiple positive zeros.

\begin{theoremalphabetic}[\cref{thm:first_characterization}]
\label{thm:A}
The augmented vertical  system $(C(a \star x^M), Lx - b)$ admits multiple positive zeros if and only if  the  characteristic system 
has a solution for some $\rho\in M^\top(\mO_L)$. 
\end{theoremalphabetic}

With this in place, the second ingredient are linear conditions on the entries of $\rho$ 
that are necessary for the characteristic system to have a solution. 
Formally, 
we first derive simple conditions on the signs of the entries of any 
matrix  $Q\in \R^{s\times \ell}$ that satisfies $Q\mu = 0\,$   for some $\mu \in \R_{>0}^{\ell}$ (\Cref{lem:feasible}). 
The  sign matrices   $\mathcal{S} \in \{-1,0,1\}^{s\times \ell}$ for which these conditions hold are called \textit{feasible}. 
Clearly, the sign  matrix of $A_{\rho}$ needs to be feasible if the characteristic system has a solution. 
Afterwards, we look more closely at the specific structure of the entries of $A_\rho$  and the pairs of signs of entries of $\oP$ and $A_\rho$ to derive additional linear constraints on $\rho$ for the 
characteristic system to have a solution (\cref{prop:D}). 

For any feasible sign matrix $\mS$, putting together the conditions 
$\rho \in M^\top(\mO_L)$, $\sign(A_{\rho}) = \mathcal{S}$, and the additional constraints, we 
define the \emph{feasible ground set} $\mC_{\mS} \subseteq \mathbb{R}^{m}$ (\Cref{def:feasible_ground_set}). This set is defined by linear equalities and inequalities and hence one can easily decide whether it is empty.   The union of the sets $\mC_\mS$ for all feasible matrices $\mS$ contains  all $\rho$'s for which the characteristic system has a solution.

\begin{theoremalphabetic}[\cref{thm:mono}]  \label{thm:B}
 If $\mathcal{C}_{\mathcal{S}} = \varnothing$ for all feasible sign matrices $\mathcal{S}\in \{-1,0,1\}^{s\times \ell}$, then $(C(a \star x^M), Lx - b)$ does not admit multiple positive zeros. 
\end{theoremalphabetic}

The third and final ingredient is to decide whether the 
 converse of  \Cref{thm:B} holds, that is, whether $\rho\in \mC_\mS$ implies that  the characteristic system has a solution. It turns out, that this holds when $\oP$  \term{induces a forest} in the sense  that the bipartite graph  whose nodes are the sets of rows and columns of $\oP$, and edges correspond to nonzero entries of $\oP$, is a forest (\cref{forest}).

\begin{theoremalphabetic}[\cref{thm:full,cor:full}]\label{thm:C}
Under the assumption  that  $\oP$ induces a forest, the system $( C (a\star x^M), Lx-b)$ admits multiple positive zeros if and only if $ \mathcal{C}_{\mS} \neq \varnothing$ for some  feasible matrix $\mS\in \{-1,0,1\}^{s \times  \ell}$. 
\end{theoremalphabetic}

The matrix $\oP$ will induce a forest when it  
has a particular structure of zero entries, which makes \Cref{thm:C} directly applicable only in specific situations. Our results hold in a more general setting where we  consider a submatrix $P$ of $\oP$ obtained by selecting one representative for each set of pairwise proportional rows and columns.
In this setting, we need to introduce an additional sign matrix, which we call  an  \term{orientation}, and its associated  \emph{oriented} characteristic system. 
The existence of an orientation such that oriented characteristic system has a positive solution is equivalent to the original 
characteristic system having a solution (\Cref{thm:mainsimplification}). Hence, by \Cref{thm:A}, an augmented vertical system admits multiple positive zeros if and only if the oriented characteristic system has a solution for some orientation. \Cref{thm:B,thm:C} admit an extension to the oriented setting.  As  the new matrix $P$ may induce a forest even if that is not the case for $\oP$, our characterization of multiple positive zeros becomes applicable to a larger class of systems.

\medskip
We conclude the introduction by  highlighting three   scenarios where our results may find an application. 
First, any polynomial system with fixed coefficients can be seen as the specialization of the vertical system obtained by including a distinct parameter in front of each distinct monomial. If this vertical system does not admit multiple positive zeros, then neither will the original system. Therefore, \Cref{thm:B} provides a method to preclude multiple positive zeros for any polynomial system.  

Second, augmented vertical systems arise naturally when studying 
 \term{reaction networks}, which are given by a collection of 
 reactions 
\[ \sum_{i = 1}^n\alpha_{ij}X_i \ce{->[$a_j$]} \sum_{i = 1}^n\beta_{ij}X_i \qquad j=1,\dots,m \]
for species $X_1,\dots,X_n$, $\alpha_{ij},\beta_{ij}\in \Z_{\geq 0}$, and $a_j>0$. 
Under the assumption  of mass-action kinetics, the   stoichiometric matrix $N=(\beta_{ij} - \alpha_{ij})$ and the reactant matrix $M  = (\alpha_{ij}) $ are used to model
 the evolution of the concentrations $x_1,\dots,x_n$ of the species in time   by a system of ordinary differential equations
\[ \frac{d  x }{dt} =  N \big( a \star x^M \big)\, .\]
The trajectories of this system are constrained to the stoichiometric  classes with equations $Lx-b=0$, where $L$ is a matrix whose rows form a basis of the left kernel of $N$. 
By letting $C$ be a full rank matrix with $\ker(N)=\ker(C)$, the positive steady states  within stoichiometric   classes are the positive zeros of the
augmented vertical   system $(C(a\star x^M),  Lx-b)$. 

Admitting at least two positive steady states is  necessary  for the network to display bistability, a property which has been linked to cell decision making   \cite{laurentkellershohn, ozbudak}. Thus, deciding upon the existence of multiple positive steady states for some parameter choice   has been an active topic of research in the field; see for example \cite{feinberg-def0,craciun-feinbergI,wiuf2013power,Joshi2014ASO,signconditions,BanajiPantea2016injectivity} and the references therein.  

Third, vertical systems define the \term{critical points} of multivariate polynomials with prescribed support. Namely, given a finite set 
$\mathcal{A} \subset \mathbb{Z}^n$  and a sign function
$\varepsilon\colon \mathcal{A} \rightarrow \{-,+\}$, we may consider all polynomials with support $\mathcal{A}$ and sign of the coefficients determined by $\varepsilon$:
\[ f = \sum_{\alpha \in \mathcal{A}}\varepsilon(\alpha)\,a_{\alpha}\,x^{\alpha} \, , \qquad a \in \mathbb{R}_{>0}^{\mathcal{A}} \, .\] 
The critical positive points of $f$ are the positive zeros of the system
$(x_1\frac{df}{dx_1}  , \dots , x_n\frac{df}{dx_n})$. By letting $M$ have as columns the elements of $\mathcal{A}$ in some order, and $M_\varepsilon$ be obtained by multiplying the $i$-th column of $M$ by
$\varepsilon(\alpha_i)$, 
the critical positive zeros of $f$ are precisely the positive zeros of the  vertical  system 
\[ M_\varepsilon( a \star x^M) \, .  \]
If we are interested in the singular positive points of $f$, we may add $f$ to the system. This, again, gives rise to a vertical system with matrix of exponents $M$ but now with $n+1$ entries.  

Deciding whether a polynomial in this family has more than one critical positive point is thus equivalent to deciding whether a vertical system admits multiple positive zeros and can be studied with the methods of this work. 
This problem arises when studying $\mathcal{A}$-discriminants \cite{gkz1994}, the number of connected components of fewnomial hypersurfaces  \cite{feliutelek,bihan2024bounds,  telekdescartesrule}, or copositivity of polynomials and SONC decompositions \cite{Ferrer:SONC}.

The paper is structured as follows. In \Cref{section:characterization}, we present the characterization of multiple positive zeros for augmented vertical  polynomial systems using the characteristic system. \Cref{section:simplifications} introduces orientations and  the properties of the  oriented characteristic system. In \Cref{section:feasibility} we study the feasibility of oriented characteristic systems and obtain generalizations of \Cref{thm:B,thm:C}. In \Cref{sec:examples}, we illustrate our main results with several examples.
In \Cref{sec:comput}, we discuss the algorithm to decide upon multiple positive zeros derived from \Cref{thm:mono,thm:full}. 
Finally, 
in \Cref{section:properties}, we discuss, in the lens of our methods,  the connectivity of the region of parameters that lead to multiple positive zeros and the existence of multiple nondegenerate positive zeros.

\subsection *{Acknowledgments} We thank Joan Ferrer, Oskar Henriksson and Nidhi Kaihnsa for useful discussions.
This project has been funded by the European Union under the Grant Agreement number 101044561, POSALG.\footnote{Views and opinions expressed are those of the authors only and do not necessarily reflect those of the European Union or European Research Council (ERC)}

\medskip
\noindent
\textbf{Notation. } 
For two vectors $\alpha,\beta \in \mathbb{R}^{m}$,  $\alpha \star \beta$  denotes its component-wise multiplication, i.e.
$(\alpha \star \beta)_{i} = \alpha_i\beta_i$ for $i = 1,\dots,m.$
Similarly, the operations $\frac{\alpha}{\beta}, e^{\alpha}, \ln{\alpha}$ are taken component-wise.

For a  vector $v\in \R^n$, we let $\sign(v)\in \{-1,0,1\}^n$ be obtained by taking the sign entry-wise. 
For a set $V\subseteq \R^n$, we let $\sign(V)=\{\sign(v) \colon v\in V\}$. For a vector $v\in \R^n$,   $v>0$ is shorthand notation for   $v\in \R^n_{>0}$. An orthant of $\R^n$ is the set of all vectors with a given fixed sign.

For an integer $n$ we let $[n]:=\{1,\dots,n\}$. For  an  interval $[a,c]$,   $[a,c]^\circ$ denotes its relative interior.

\section{Characterization of multiple positive zeros}
\label[section]{section:characterization}

The main goal of this section is to prove \cref{thm:first_characterization}, which reformulates  the problem of deciding upon the existence of parameter values for which an augmented vertical system admits multiple positive zeros,  to deciding upon the feasibility of an alternative system, which we term the characteristic system. The key idea goes back to earlier works in the theory of chemical reaction networks, e.g. \cite{craciun-feinbergI}, and similar ideas have been used in several later works e.g. \cite{conradiflockerzimulti, Conradi_2019, BIHAN2020107412}.

We consider an augmented vertical system
\[ F = \big( C (a\star x^M), Lx-b\big) \in \R[a,b,x^\pm]^d \]
with variables $x=(x_1,\dots,x_n)$ and parameters   $a=(a_1,\dots,a_{\bar{m}}), b=(b_1,\dots,b_{d-\bs})$, and 
where
$M\in \Z^{n\times \bar{m}}$, $C\in \R^{\bs\times \bar{m}}$ has rank $\bs$, and $L\in \R^{(d-\bs)\times n}$ has rank $d-\bs$.  By $F_{a^*,b^*}$ we refer to the specialization of $F$ to  given $a^*\in \R^m$ and $b^*\in \R^{d-\bs}$. We introduce the bar above $s,m,\dots$ to reserve the letters without the bar to the main objects of this work to be introduced in \Cref{section:simplifications}. 

Our  aim is to decide whether \term{$F$ admits multiple positive zeros}, in the sense that there exists a choice of parameters $a\in \R^{\bar{m}}_{>0}$ and $b\in \R^{d-\bs}$ such that $F_{a,b}$ has at least two distinct zeros in $\R^n_{>0}$; equivalently
 the    correspondence set  
\begin{equation}\label{eq:M}
\mathcal{M}_F  \coloneqq \big\{(a,x,y) \in \R^{m}_{>0} \times \R^{n}_{>0} \times \R^{n}_{>0}  \colon C(a\star x^M) = C(a\star y^M) = 0, \  x \neq y, \  L(x - y) = 0\big\} \, 
\end{equation}
is nonempty.

In preparation for the main result of this section, we   introduce the following objects   and concepts associated with (the data of) $F$: 
\begin{itemize}
\item We say that  $C$ is \term{principal}  if the $(\bs\times \bs)$ submatrix of $C$  formed by the first $\bs$ columns has full rank.   
In this case, we  say that $F$ is defined by a principal matrix. 
Perhaps  after a suitable reordering of the columns of $C$, we can assume without loss of generality that $C$ is principal in what follows. 
 
\item For $C$ principal, by letting $\bl\coloneqq \bar{m}-\bs$, there exists a unique 
matrix of the form 
\begin{equation}\label{eqP}
\widehat{P} = \begin{pmatrix} \oP \\
\id_{\bl}
\end{pmatrix} \,  \qquad \oP \in \R^{\bs \times \bl} 
\end{equation}
whose columns form a basis of $\ker(C)$. 
 The submatrix $\oP$ is called  the \term{reduced matrix} of $C$. Observe that, as the lower block of $\widehat{P}$ is the identity matrix,  for $x\in \ker(C)$ it holds $x=\widehat{P}\mu$ with $\mu= x_{\bar{s}+1,\dots,\bar{m}}$ and in particular
\begin{equation*}
x\in \ker(C)\cap \R^{\bar{m}}_{>0} \quad \Leftrightarrow \quad x= \widehat{P} \mu  \text{ for some }\mu \in \R^{\bl}_{>0}\, . 
\end{equation*}

Different orderings of the columns of $C$ yielding to principal matrices, may give rise to different reduced matrices. 

\item 
For a vector $\rho\in \R^{\bar{m}}$, consider the matrix 
\begin{equation}\label{eq:characteristic_matrix} 
A_{\rho} \coloneqq \begin{pmatrix} \id_{\bs} & -\oP \end{pmatrix} \diag( e^\rho)    \begin{pmatrix} \oP \\
\id_{\bl} \end{pmatrix} \quad \in \R^{\bs\times \bl}\, ,
\end{equation}
that is,
\begin{equation*}
(A_{\rho})_{ij} \coloneqq     \oP_{ij} (e^{\rho_{i}} - e^{\rho_{\bs+j}})\, ,  \qquad i = 1,\dots,\bs, \quad j = 1,\dots,\bl\, .
\end{equation*}
The \term{characteristic system} (associated with $\oP$ and $\rho$) is the system
 \begin{equation*}
A_{\rho}\mu = 0\,, \qquad \oP \mu > 0\,, \qquad \mu\in \R^{\bl}_{>0}\, . 
\end{equation*}
 
 \item We  let $\mO_L$ denote the union of all the orthants of $\R^n$ that $\ker(L)$ intersects nontrivially. 
 If $L$ is the empty matrix, then $\mO_L=\R^n\setminus \{0\}$.

\item 
 We introduce the set
\begin{align}
\overline{\mE}_F & \coloneqq \big\{(v,\delta,\mu) \in \ker(L) \times (\R^{n}\setminus \{0\}) \times \R_{>0}^{\bar\ell}  : \sign(v)=\sign(\delta),   \nonumber  \\  & \hspace{8cm}\  \oP\mu > 0, \ A_{M^{\top}(\delta)}\mu = 0 \big\}\, \label{eq:Ebar}
\end{align}
and the following two maps:
\begin{align}\label{eq:Psi}
\Psi\colon & \mM_F  \rightarrow   \R^n \times \R^n \times \R^{\bar\ell}_{>0}
\qquad (a,x,y) \mapsto  \big(x - y,  \ln x - \ln y , (a\star y^M)_{\bar{s}+1,\dots,\bar{m}}\big) \, ,  
\\[6pt]
\label{eq:Phi}
\Phi \colon & \overline{\mE}_F   \rightarrow \R^m_{>0} \times \R^n_{>0}\times   \R^n_{>0} \qquad 
 (v,\delta,\mu)  \mapsto 
 \big(a(v,\delta,\mu),x(v,\delta), y(v,\delta)\big) \, ,
\end{align}
with the convention that $\tfrac{0}{0}=1$ and where $\bm{1}$ is the vector of all ones. Observe that the map is well defined as the sign of $v$,  $e^{\delta}-\bm{1}$ agree and $\widehat{P}\mu>0$, and $v=x(v,\delta) -y(v,\delta)$.

\item The  \term{characteristic set} (of $F$) is
\begin{align}
\mE_F  & \coloneqq \big\{\rho \in M^\top(\mO_L) :   \oP\mu > 0, \ A_{\rho}\mu = 0 \text{ for some } \mu\in \R_{>0}^{\bar\ell}\big\}\,  \label{eq:E}
\end{align}
There is a surjective map $\overline{\mE}_F\rightarrow \mE_F$ that sends $(v,\delta,\mu)$ to $M^\top(\delta)$. 
\end{itemize}

\begin{theorem}
\label{thm:first_characterization}
Let $F\in \R[a,b,x^\pm]^d$ be an augmented vertical system defined by a principal matrix. 
Consider the sets $\mM_F$, $\overline{\mE}_F$ and $\mE_F$ from \eqref{eq:M}, \eqref{eq:Ebar} and \eqref{eq:E}.
\begin{itemize}
    \item The maps $\Psi,\Phi$ from  \eqref{eq:Psi}  and \eqref{eq:Phi} 
induce a bijection between $\mM_F$ and $\overline{\mE}_F$. 
\item $F$ admits multiple positive zeros if and only if $\mE_F\neq \varnothing$. 
\end{itemize}
\end{theorem}
\begin{proof}
Write $F =( C (a\star x^M), Lx-b)$ with $C\in \R^{\bs\times \bar{m}}$  principal.  
We first observe that with $\widehat{P},\oP$ as in \eqref{eqP} and for any $\mu\in \R^{\bl}_{>0}$, it holds
\begin{equation}\label{eq:Arho_equiv}
\begin{aligned}
e^\rho \star  (\widehat{P}\mu) \in \ker(C)= \ker \begin{pmatrix} \id_{\bs} & -\oP \end{pmatrix}\, &  \Leftrightarrow \quad \begin{pmatrix} \id_{\bs} & -\oP \end{pmatrix} \diag( e^\rho)    \begin{pmatrix} \oP \\
\id_{\bl} \end{pmatrix} \mu=0 \\
&  \Leftrightarrow \quad  A_\rho \mu=0\,  . 
\end{aligned}
\end{equation}

We verify   that $(v,\delta,\mu)\coloneqq \Psi(a,x,y)\in \overline{\mE}_F$ if $(a,x,y)\in \mM_F$. The definition of $\mM_F$ gives that $v=x-y\in \ker(L)$, $x\neq y$ so $\delta\neq 0$, and, as $\ln$ is an increasing function, $\sign(v)=\sign(\ln x - \ln y)=\sign(\delta)$.   

As $a\star y^M\in \ker(C)$ and $\mu=(a\star y^M)_{\bar{s}+1,\dots,\bar{m}}$, it holds that $a\star y^M = \widehat{P}\mu$  and hence $\bar{P}\mu>0$.  
Using now that 
\[M^\top(\delta) = M^\top \ln\big(\tfrac{x}{y}\big) = \ln\big(\tfrac{x}{y}\big)^M \quad\text{ and hence }\quad e^{M^\top(\delta)} \star y^M =  x^M\,,\]
we obtain
\[e^{M^\top(\delta)} \star \widehat{P}\mu = e^{M^\top(\delta)} \star a\star y^M = a\star x^M \] 
and since $a\star x^M\in \ker(C)$, \eqref{eq:Arho_equiv} gives that 
$ A_{M^\top(\delta)}\mu=0$. 
This shows that $\im(\Psi)\subseteq \overline{\mE}_F$.

We now show that $(a,x,y)\coloneqq\Phi(v,\delta,\mu)\in \mM_F$ if $(v,\delta,\mu)\in \overline{\mE}_F$. 
By definition  of $\Phi$,  $x-y=v\in \ker(L)$,  $x\neq y$ as $\delta\neq 0$, and 
$a \star y^M= \widehat{P}\mu\in \ker(C)$. The relation $x=y\star e^\delta$  gives
\[a\star x^M = a\star y^M \star (e^{\delta})^M  = e^{M^\top(\delta)}\star \widehat{P}\mu \in \ker(C) \]
where the last inclusion follows from  \eqref{eq:Arho_equiv} as $A_{M^\top(\delta)}\mu=0$ by hypothesis. 
Hence $\im(\Phi)\subseteq \mM_F$. 

 The compositions $\Phi\circ \Psi$ and $\Psi\circ \Phi$ are the identity maps, as it is shown by a simple computation using that  
 \[e^{\ln x - \ln y} = \tfrac{x}{y}\, , \qquad 
\frac{x-y}{  \tfrac{x}{y}  - \bm{1}} = y \, ,\qquad  \ln \frac{v\star e^{\delta}}{e^{\delta} - \bm{1}} - \ln  \frac{v}{e^{\delta} - \bm{1}} = \delta \,,  \]
and that the columns of $\widehat{P}$  form a basis of $\ker(C)$.

The last statement holds as $F$ admits multiple positive zeros if and only if $\mM_F\neq \varnothing$, equivalently $\overline{\mE}_F\neq \varnothing$, and  there is a surjective map from $\overline{\mE}_F$ to $\mE_F$. 
\end{proof}

\section{The oriented characteristic system}
\label[section]{section:simplifications}

By \cref{thm:first_characterization}, deciding whether $F$ admits multiple positive zeros  amounts to deciding whether $\mE_F\neq \varnothing$, that is,  
whether the characteristic system  is feasible for some $\rho$ in  $M^\top(\mO_L)$. 
Before we explore how to address this problem in \Cref{section:feasibility}, we 
take into consideration the linear dependencies among columns or rows of $P$ to derive a simplification of 
the characteristic system. This
 will allow us to tackle a larger class of systems   in \Cref{section:feasibility}.

For a matrix $B$, we denote its $j$-th column by $B_j$. The transpose of the $j$-th row of $B$ is then $(B^\top)_j$. 
We note that if the reduced matrix $\oP$ has two proportional nonzero rows   with a negative proportionality factor, then $\oP\mu>0$ cannot hold and $\mE_F= \varnothing$.

Given any matrix $\oP \in \R^{\bs \times \bl}$, we consider 
a \term{row  partition} $\tau=( \tau_1, \dots , \tau_{s})$ and a \term{column  partition}  $\alpha=(\alpha_1 , \dots ,\alpha_{\ell})$, 
\[
 [\bs] = \tau_1\sqcup \dots \sqcup \tau_{s}\, , \qquad  [\,\bl\,] = \alpha_1 \sqcup \dots \sqcup \alpha_{\ell} \, , \]
satisfying
\begin{align*}
 i , i' \in \tau_k \text{ for some }k &  \quad \Rightarrow \quad   (\oP^\top)_i = \gamma' (\oP^\top)_{i'} && \text{ for some } \gamma' > 0 \\
 j , j' \in \alpha_k \text{ for some }k & \quad \Rightarrow \quad   \oP_j = \gamma \oP_{j'} && \text{ for some } \gamma \neq 0 \,.
  \end{align*}
  That is, for any pair of indices in the same block of $\tau$ (resp. $\alpha$), the corresponding rows (resp. columns) of $P$ are proportional. 
We do not impose that indices of proportional columns or rows \emph{must} be in the same block. This allows us to consider the \term{singleton partitions}, defined by the singleton blocks 
\[ \tau_i=\{i\}\, , \quad i=1,\dots,\bs\, , \qquad\text{and}\qquad  \alpha_j=\{j\}\, ,\quad j=1,\dots,\bl\, . \] 

 Associated with partitions $\tau,\alpha$, we consider the following data:
\begin{itemize}
\item By selecting one representative column   and one representative row  in each block 
 we  obtain  maps 
 \[ c\colon  [\,\ell\,]  \rightarrow  [\, \bl \, ]\, ,  \qquad r\colon  [s] \rightarrow [\,\bs\,]  \,. \]
\item
We consider the  vectors $\gamma \in (\mathbb{R} \setminus \{0\})^{\bl}$ and  $\gamma' \in \mathbb{R}_{>0}^{\bs}$ such that
\begin{equation*}
\begin{aligned}
\oP_j &= \gamma_j \oP_{c(k)} \, ,  && \quad k \in [\ell], \ j\in \alpha_k \\
 (\oP^\top)_i & = \gamma'_i \, (\oP^\top)_{r(k)} \, , && \quad k \in [s], \ i\in \tau_k\, .  
\end{aligned}
\end{equation*}
\item We define the \term{simplified reduced matrix} $P\in \R^{s\times \ell}$ to be the submatrix of $\oP$ given by
\begin{equation}
\label{def:simpif_reduced_matrix}
P_{ik} :=  \oP_{r(i), c(k)} \,, \qquad i\in [s], \ k\in [\ell] \, . \end{equation}
\item
 We consider the blocks of $\alpha$ for which all proportionality factors are positive:  
\begin{align*}
\mathcal{U}_\alpha  &\coloneqq \{k \in [\ell] \colon \gamma_j > 0 \text{ for all }j \in \alpha_k\}\, .
\end{align*}
\end{itemize}
We refer to the triplet $(\tau,r,\gamma')$ as a the data of the row partition, and the triplet $(\alpha,c,\gamma)$ as the data of the  column partition. An \term{orientation compatible with $\alpha$}   is a sign matrix $\sigma \in \{-1,0,1\}^{2 \times \ell}$ such that 
\begin{equation*} 
(\sigma_{1k},\sigma_{2k}) = (1,1) \quad \text{ if }\quad k \in \mathcal{U}_\alpha\, .
\end{equation*}
Clearly, the orientation $\sigma_+$ with all entries equal to $1$  is compatible with any $\alpha$.

For any $\rho\in \R^m$ and orientation $\sigma$ compatible with $\alpha$, we consider the
matrices $P^{\sigma}$ and 
$A^\sigma_{\rho}$ 
 defined by
\begin{equation}
    \label{eq:oriented}
     (P^{\sigma})_{ik} = \sigma_{1k}P_{ik}\, , \qquad  (A^\sigma_{\rho})_{ik} \coloneqq    P_{ik} 
     (\sigma_{1k}\,e^{\rho_i} - \sigma_{2k}\,e^{\rho_{s+ k}}),  \qquad i \in [s], \  k \in [\ell]\, ,
        \end{equation}
 which give rise to the \term{oriented characteristic system}  associated with $(P,\sigma,\rho)$:      
\begin{equation}
    \label{eq:characteristic2}
A^\sigma_{\rho}\mu = 0\, , \qquad P^{\sigma}\mu > 0\,,\qquad \mu\in \R^\ell_{>0}\, .
    \end{equation}

\begin{definition}\label{def:mGsigma}
With the data above, let  $m\coloneqq s+\ell$. We define the  set 
 $\mathcal{G}^{\sigma}$ to consist of all $\rho\in \mathbb{R}^{m}$ satisfying the following for some $\bar{\rho}\in M^\top(\mathcal{O}_L)$:
\begin{enumerate}[label=(\roman*)]
 \item
     For $k\in [s]$,  $\rho_k = \bar{\rho}_i$ for all $i\in \tau_k$. 
     \item For all $k \in [\ell]$  we have
     \begin{enumerate}
     \item[(a)] If $k \in \mathcal{U}_\alpha$, then  
     \begin{equation}
     \label{eq:conditionU1}
     \rho_{s + k} \in \Big[\min_{j  \in \alpha_k}\bar{\rho}_{\bs+j},  \max_{j  \in \alpha_k}\bar{\rho}_{\bs+j} \Big]^\circ \, .
     \end{equation}
     \item[(b)] If $k \notin \mathcal{U}_\alpha$, then there exist  
\begin{equation}\label{eq:conditionU2}
\begin{aligned}
z_{+} & \in \Big[\min_{\gamma_j>0 \colon   j\in \alpha_k}\bar{\rho}_{\bs+j},  \max_{\gamma_j>0  \colon  j\in \alpha_k}\bar{\rho}_{\bs+j} \Big]^\circ,  \\ 
z_-  & \in \Big[\min_{\gamma_j<0 \colon  j\in \alpha_k}\bar{\rho}_{\bs+j},  \max_{\gamma_j<0   \colon   j\in \alpha_k}\bar{\rho}_{\bs+j} \Big]^\circ , 
\end{aligned}
\end{equation}
     such that 
\begin{equation}\label{eq:Vsigns}
\sign(e^{z_+} - e^{z_-}) = 
 \sign( \sigma_{2k}\, e^{\rho_{s+k}} -   \sigma_{1k}\, e^{z_+}  )= \sign( \sigma_{2k}\, e^{\rho_{s+k}} - \sigma_{1k} \, e^{z_-}  )\, . 
\end{equation}
\end{enumerate}
\end{enumerate}
Observe that if $\alpha_k=\{j\}$ is a singleton, then necessarily $k\in \mathcal{U}_\alpha$ and \eqref{eq:conditionU1} is simply $\rho_{s+k}=\bar{\rho}_{\bs+j}$.
\end{definition}

We define the \term{oriented characteristic set} (of $F$) as
\begin{equation*}
    \mE^\sigma_F \coloneqq \{ \rho  \in \mathcal{G}^{\sigma} :
   P^{\sigma}\mu > 0\, ,  A^\sigma_{\rho}\mu = 0 \text{ for some } \mu\in \R_{>0}^{\ell} \,\} \, . 
\end{equation*}

\begin{remark}\label[remark]{rk:singleton}
If $\alpha$ is the singleton partition, then $c$ is the identity, $\ell=\bl$, and $\mathcal{U}_\alpha=[\, \bl\,]$. Hence the only orientation compatible with $\alpha$ is $\sigma_+$ and 
\begin{multline*}
    \mathcal{G}^{\sigma_+}= \big\{\rho \in \R^m_{>0} :  \text{there exists }\bar{\rho}\in M^\top(\mathcal{O}_L) \text{ such that } \\ \rho_k = \bar{\rho}_i \text{ if }i\in \tau_k\, \text{ and } \ \rho_{s+j}=\bar{\rho}_{\bs+j}\text{ for all }j\in [\ell]\big\}\, .
\end{multline*}
If $\tau$ also is  the singleton partition, $r$ is the identity map, $s=\bs$,    $m=\bar{m}$, $\mathcal{G}^{\sigma_+}= M^\top(\mathcal{O}_L)$,  
 $P^{\sigma_+}=P=\bar{P}$,   $A_\rho^{\sigma_+}=A_\rho$, and it holds that 
if $P_{ij}=0$, then also $(A_\rho)_{ij}=0$. In particular $\mE_F^{\sigma_+}=\mE_F$. 
\end{remark}

 \begin{theorem}
\label{thm:mainsimplification}
 Let $F  \in \R[a,b,x^\pm]^n$  be an augmented vertical system defined by a principal matrix and consider given a choice of row and column partitions.  

 Then $\mE_F \neq \varnothing$ if and only if  $\mE^\sigma_F\neq \varnothing$ for some orientation $\sigma \in \{-1,0,1\}^{2 \times \ell}$ compatible with the column partition. 
Furthermore, if $\rho\in \mE^\sigma_F$ and $\bar{\rho}$ satisfies the conditions  in \Cref{def:mGsigma} for $\rho$, then $\bar{\rho}\in \mE_F$.  
\end{theorem}
\begin{proof}
Write $F=( C (a\star x^M), Lx-b)$ with $C\in \R^{\bs\times \bar{m}}$  principal and   reduced matrix $\oP\in \R^{\bs\times \bl}$.  Let  $(\tau,r,\gamma')$ and $(\alpha,c,\gamma)$ be the data of the  row and column partitions respectively, and consider the  associated simplified reduced matrix $P\in \R^{s\times \ell}$ from \eqref{def:simpif_reduced_matrix}.

To show the forward implication, assume $\mE_F\neq \varnothing$ and  let 
$(\bar{\rho},\bar{\mu}) \in M^\top(\mathcal{O}_L)\times \R_{>0}^{\bl}$ such that 
\begin{equation}
\label{first_assumptions}
 A_{\bar{\rho}}\, \bar{\mu} = 0\,, \qquad \oP\, \bar{\mu} > 0\, .
\end{equation}

We define an orientation $\sigma\in \{-1,0,1\}^{2\times \ell}$ as follows. For each \(k \in [\ell] \), let
\[
\omega_k \coloneqq \sum_{j\in \alpha_k}\gamma_j \bar{\mu}_j\,, \qquad 
\nu_k \coloneqq \sum_{j\in \alpha_k}\gamma_j \bar{\mu}_j\, e^{\bar{\rho}_{\bs + j}}, \qquad  
\sigma(k) \coloneqq (\sign(\omega_k),\sign(\nu_k))\, .
\]
If $k \in \mathcal{U}_\alpha$, then $\omega_k,\nu_k > 0$ and hence $\sigma$ is indeed compatible with $\alpha$. 

We define  vectors $\mu\in \R^{\ell}_{>0}$ and $\rho\in \R^{m}$ entrywise for $k\in [\ell]$ and $i\in [m]$ as 
\[ \mu_{k} \coloneqq 
\begin{cases}
|\omega_k|  & \text{if }\omega_k \neq 0\, , \\
1    & \omega_k= 0\, .
\end{cases}
 \qquad 
\rho_{i} \coloneqq \begin{cases}
\bar{\rho}_{r(i)} & \text{if } i \in [s]\, , \\
\log\left(  \frac{|\nu_k |}{\mu_k} \right) & \text{if } i = s+ k\, , \  k\in [\ell], \    \nu_k \neq 0\,,  \\
0 & \text{otherwise}\, .
\end{cases}
\quad
\]
Observe that 
\begin{align}
\label{eq:psi}
\sigma_{1k} \, \mu_k= \omega_k\, , \qquad 
 \sigma_{2k}\, \mu_k \, e^{\rho_{s+k}} =  \nu_k\, ,   \qquad \text{for all }k \in [\ell]\, .
 \end{align}

Let us see that $\mu$ is a solution to the oriented characteristic system associated with $(P,\sigma,\rho)$. Using \eqref{eq:psi},
for \(k \in [\ell]\) and $i\in [s]$ we have
\begin{align*}
P^\sigma_{ik}\mu_k  &=  \sigma_{1k} P_{ik} \, \mu_k  = P_{ik} \, \omega_k =   \sum_{j\in \alpha_k} (\gamma_j \oP_{r(i),c(k)})  \bar{\mu}_j  = 
 \sum_{j\in \alpha_k}\oP_{r(i),j}\bar{\mu}_j \,.   \\
 (A_{\rho}^\sigma)_{ik}\, \mu_k &= 
P_{ik}(\sigma_{1k}\, e^{\rho_i} - \sigma_{2k}\, e^{\rho_{s + k}})\mu_k 
=P_{ik} (e^{\rho_i}\, \omega_k  - \nu_k) \\
&= \sum_{j\in \alpha_k} \gamma_j \oP_{r(i), c(k)}  (e^{\bar{\rho}_{r(i)}}- e^{\bar{\rho}_{\bs+j}}) \bar{\mu}_j  
=  \sum_{j\in \alpha_k} \oP_{r(i),j} (e^{\bar{\rho}_{r(i)}} - e^{\bar{\rho}_{\bs + j}} )\bar{\mu}_j  =  \sum_{j\in \alpha_k}   (A_{\bar{\rho}})_{r(i),j} \bar{\mu}_j \, . 
 \end{align*} 
Summing over all \(k\in [\ell]\) and using \eqref{first_assumptions} it follows that  
\[ A_{\rho}^\sigma  \, \mu = 0\, ,\qquad  P^{\sigma} \mu  > 0\,.\] 

We verify now that $\rho\in \mathcal{G}^{\sigma}$ by showing that $\rho$ satisfies  conditions (i)-(ii) in \Cref{def:mGsigma} with the given $\bar{\rho}$. For $k\in [s]$ and $i_1,i_2 \in \tau_k$,  \eqref{first_assumptions} gives that
\begin{equation*}
0= \sum_{j=1}^{\bl} \left(\tfrac{1}{\gamma'_{i_1}}(A_{\bar{\rho}})_{i_1,j}  -\tfrac{1}{\gamma'_{i_2}}(A_{\bar{\rho}})_{i_2,j}\right) \bar{\mu}_j  = \left(\sum_{j=1}^{\bl}  \oP_{r(k),j}\bar{\mu}_j \right) (e^{\bar{\rho}_{i_1}} - e^{\bar{\rho}_{i_2}})\, .
\end{equation*}
As $\oP\, \bar{\mu}>0$, this gives that $\bar{\rho}_{i_1}=\bar{\rho}_{i_2}$ for all $i_1,i_2\in \tau_k$. 
Hence, condition (i) in \Cref{def:mGsigma} holds. 
For condition (ii), if $k \in \mathcal{U}_\alpha$, then 
$\omega_k, \nu_k >  0$ and 
\[e^{\rho_{s+k}} = \frac{\nu_k}{\omega_k} = \frac{ \sum_{j\in \alpha_k}\gamma_j\bar{\mu}_j\, e^{\bar{\rho}_{\bs + j}}}{ \sum_{j\in \alpha_k}\gamma_j\bar{\mu}_j} \in \Big[\min_{j\in \alpha_k} e^{\bar{\rho}_{\bs+j}},  \max_{j\in \alpha_k} e^{\bar{\rho}_{\bs+j}} \Big]^\circ \, .  \] 
As  the exponential function is strictly increasing, condition (ii)(a) in \eqref{eq:conditionU1} holds. 

If $k\in [\ell]\setminus \mathcal{U}_\alpha$, then let 
\[\omega_+ \coloneqq \sum_{\gamma_j > 0 \colon  j\in \alpha_k}\gamma_j\bar{\mu}_j>0\,, \qquad \omega_-\coloneqq\sum_{\gamma_j < 0 \colon  j\in \alpha_k}\gamma_j\bar{\mu}_j<0\, , \]
and $z_+$ and $z_-$ be defined such that 
\[
e^{z_+} = \frac{\sum_{\gamma_j > 0 \colon j\in \alpha_k} \gamma_j\, \bar{\mu}_j\, e^{\bar{\rho}_{\bs+j}}}{\omega_+}>0\, , 
 \qquad e^{z_-} = \frac{\sum_{\gamma_j < 0 \colon j\in \alpha_k}\gamma_j\, \bar{\mu}_j\, e^{\bar{\rho}_{\bs+j}}}{\omega_-}>0 \, . 
 \]
By construction,   $z_+,z_-$ belong to the intervals indicated in condition (ii)(b) in  \eqref{eq:conditionU2}.  
Additionally, using that  $\omega_+= \omega_k-\omega_-$, $ \nu_k=\omega_+e^{z_+} + \omega_- e^{z_-}$, and  \eqref{eq:psi}, we obtain that 
\begin{align*}
\omega_+ (e^{z_+} - e^{z_-})  & = \nu_k - \omega_k e^{z_-}    =  \mu_k \, ( \sigma_{2k}\,  e^{\rho_{s+k}}  -  \sigma_{1k} \, e^{z_-})  \, .
\end{align*}
As $\omega_+>0$ and $\mu_k>0$, we have shown that $\sign(e^{z_+}-e^{z_-}) = \sign( \sigma_{2k}e^{\rho_{s+k}} - \sigma_{1k}  e^{z_-})$. 
The other   equality of signs in \eqref{eq:Vsigns} follows analogously by studying $\omega_- (e^{z_+} - e^{z_-})$. This shows that $\rho \in \mE^\sigma_F$ and hence $\mE^\sigma_F\neq \varnothing$.

\medskip
We now show the reverse implication and the last claim of the statement. Assume there exist an orientation $\sigma$
such that $\mE_F^\sigma\neq \varnothing$. Let $(\rho,\mu) \in \mathcal{G}^{\sigma}\times \mathbb{R}^{\ell}_{>0}$ such that
\begin{equation}\label{eq:intermediate1}
A_{\rho}^{\sigma}\mu= 0\,, \qquad P^{\sigma}\mu > 0 \, .
\end{equation}
Let $\bar{\rho}\in M^\top(\mathcal{O}_L)$ satisfy  the conditions of the definition of $\mathcal{G}^{\sigma}$ in  \Cref{def:mGsigma} for this $\rho$. 
We have in particular,  $\bar{\rho}_i=\rho_{k}$ if $i\in \tau_k$ by condition (i). 

We fix $k\in [\ell]$. Using that $\rho\in \mathcal{G}^{\sigma}$, we consider:
\begin{itemize}
\item 
 If $k \notin \mathcal{U}_\alpha$, then there exist $z_+,z_-\in \R$ and   $\beta_j \in (0,1)$ for all $j\in \alpha_k$
     such that  
    \begin{align*}
e^{z_+} &= \sum_{\gamma_j > 0\colon j\in \alpha_k}\beta_j e^{\bar{\rho}_{\bs+j}}\, ,  & \sum_{ \gamma_j > 0\colon j\in \alpha_k}\beta_j  &= 1 \, , \\ 
e^{z_-} &= \sum_{\gamma_j < 0 \colon j\in \alpha_k}\beta_je^{\bar{\rho}_{\bs+j}}\, ,   &  \sum_{\gamma_j < 0\colon j\in \alpha_k}\beta_j &= 1 \, . 
\end{align*}
Condition \eqref{eq:Vsigns} guarantees that  
there exist $\omega_+,\omega_->0$ satisfying the linear system
\begin{equation}\label{eq:omega}
\omega_+-\omega_-=\sigma_{1k} \,  \quad \text{   and } \quad 
e^{z_+}\omega_+- e^{z_-}  \omega_-  = \sigma_{2k}e^{\rho_{s+k}}\, . 
\end{equation}

\item If 
$k \in \mathcal{U}_\alpha$, then  there exist $\beta_j \in (0,1)$  for all $j\in \alpha_k$ such that
\[  e^{\rho_{s + k}} = \sum_{j\in \alpha_k} \beta_j  e^{\bar{\rho}_{\bs+j}} ,\qquad    \sum_{j\in \alpha_k} \beta_j = 1\, . \]
Define    $z_+\coloneqq\rho_{s + k}$ and $z_-\coloneqq1$, and note that by letting $\omega_+\coloneqq 1$, $\omega_-\coloneqq 0$, then \eqref{eq:omega} also holds. 
 \end{itemize}

   With this in place, define, for $j\in \alpha_k$,
   \begin{equation}\label{eq:mubar}
\bar{\mu}_j \coloneqq \begin{cases}
\frac{\omega_+ \beta_j}{\gamma_j} \mu_k & \text{if } \gamma_j > 0 \, ,  \\ 
-\frac{\omega_- \beta_j}{\gamma_j}  \mu_k &  \text{if } \gamma_j < 0 \, .
\end{cases} \end{equation}
Observe that $\bar{\mu}_j>0$ for all $j$. 
Using \eqref{eq:omega}, \eqref{eq:mubar} and the definition of $P$ and $\gamma$, it now holds for $i \in [\bs]$ with $i\in \tau_u$  that
\begin{align*}
\sum_{j\in \alpha_k}\oP_{ij}\bar{\mu}_j   & = 
   \gamma_i' P_{uk} \left( \sum_{\gamma_j > 0 \colon j\in \alpha_k}\beta_j\, \omega_+  - \sum_{\gamma_j < 0\colon j\in \alpha_k}\beta_j \, \omega_- \,\right) \mu_k \\
& = \gamma_i'    P_{uk}\, (\omega_+ -  \omega_- )  \mu_k 
 = \gamma_i'   \sigma_{1k}P_{uk}\, \mu_{k} 
 = \gamma_i'     P^\sigma_{uk}\, \mu_k  \, . 
\end{align*}
Similarly, using that $\bar{\rho}_i=\rho_{u}$,  we have
\begin{align*}
\sum_{j\in \alpha_k}(A_{\bar{\rho}})_{ij} \bar{\mu}_j  &= 
\gamma_i' P_{uk} \left(
\sum_{\gamma_j > 0 \colon j\in \alpha_k}\beta_j(e^{\bar{\rho}_i} - e^{\bar{\rho}_{\bs+j}})\omega_+\, 
- \sum_{\gamma_j < 0 \colon j\in \alpha_k } \beta_j (e^{\bar{\rho}_i} - e^{\bar{\rho}_{\bs+j}})\omega_-\,  \right)  \mu_k \\
&=\gamma_i'P_{uk} \Big( \sigma_{1k}\, e^{\rho_{u}}  + \big( e^{z_-}  \omega_-  - e^{z_+}\omega_+
\big) \Big) \mu_k\,   \\
&= \gamma_i' P_{uk} \big(\sigma_{1k} \, e^{\rho_{u}}  - \sigma_{2k}e^{\rho_{s+k}}\big) \mu_k   =  \gamma_i'   (A_{\rho}^{\sigma})_{uk}\,  \mu_k  \, . 
\end{align*}

As these equalities hold for all $k$, summing over all $k$, using $\gamma_i'>0$, and \eqref{eq:intermediate1}, it follows that $\bar{\rho} \in \mE_F$, which concludes the proof.
\end{proof}

\section{Feasibility of the oriented characteristic system}
\label[section]{section:feasibility}

By \cref{thm:first_characterization,thm:mainsimplification}, to decide whether $F$ admits multiple positive zeros  it is enough to decide 
whether $\mE^\sigma_F\neq \varnothing$ for some orientation $\sigma$. 
In this section, we first show that $\mE^\sigma_F$ is always contained in a union of relative interiors of polyhedral cones, and hence, if the union is empty, the system admits no  multiple positive zeros. This criterion applies to any augmented vertical system.
Second, we show that this union is exactly equal to $\mE^\sigma_F$ when the simplified reduced matrix $P$ \emph{induces a forest} (\cref{forest}), which requires that $P$ is a matrix with a specific structure of zero entries. In this case, the existence of multiple positive zeros is completely characterized by having one of the relative interiors being nonempty. 
 
\subsection{Necessary conditions for multiple positive zeros}
We start by exploring simple conditions on the signs of the entries of  $A_{\rho}^{\sigma}$ that preclude solutions to the oriented characteristic system \eqref{eq:characteristic2} to exist, hence $\mE_F^\sigma=\varnothing$.

\begin{definition}\label{def:feasible}
    We say that a sign matrix $\mathcal{S}\in \{-1,0,1\}^{s \times \ell}$ is \term{feasible}  if   for all $i \in [s]$,  the $i$-th row either is   zero, or has at least one entry equal to $1$ and one equal to $-1$.
\end{definition}

\begin{lemma} 
\label[lemma]{lem:feasible}
Consider a sign matrix $\mathcal{S}\in \{-1,0,1\}^{s \times \ell}$.
There exists  a matrix $Q \in \mathbb{R}^{s \times \ell}$  such that 
\[\sign(Q) = \mathcal{S}\,, \quad \text{and}\quad  Q\mu = 0 \text{ for some }\mu\in \R^\ell_{>0}\]
if and only if $\mathcal{S}$ is feasible.
\end{lemma}
\begin{proof}
Let $Q \in \mathbb{R}^{s \times \ell}$  such that $\sign(Q) = \mathcal{S}$ and assume that $\mathcal{S}$ is not feasible. Then 
 there exists $i\in [s]$ such that 
 the $i$-th row of $\mS$ is nonzero and all nonzero entries are equal. 
This gives that either 
$\sum_{j=1}^\ell Q_{ij}\mu_j < 0$ or $\sum_{j=1}^\ell Q_{ij}\mu_j > 0$, a contradiction.  

To show the reverse implication, assume that $\mathcal{S}$ is  feasible. If the $i$-th row of $\mS$ is zero, let $Q_{ij}=0$ for all $j\in [\ell]$.  
Otherwise, as $\mS$ is feasible, the $i$-th row of $\mS$ contains both signs $1$ and $-1$. As $\mu>0$, the entries of the $i$-th row of $Q$ can be chosen to be orthogonal to $\mu$ and satisfy $\sign(Q_{ij}) = \mathcal{S}_{ij}$ for  $j \in [\ell]$. 
\end{proof}

By 
\cref{lem:feasible}, if $\sign(A_{\rho}^{\sigma})$ is not  feasible, then $\mE^\sigma_F=\varnothing$. We explore next 
additional   conditions on $\rho$ that guarantee that \eqref{eq:characteristic2} does not have solutions and that depend on the pairs of signs of the entries of $P^\sigma$ and $A_{\rho}^\sigma$. This leads to the introduction of the following sets.

\begin{definition}
\label[definition]{def:Subsets}
Let $P \in \R^{s\times \ell}$ and consider sign matrices $\mathcal{S}\in \{-1,0,1\}^{s \times \ell}$ and $\sigma \in \{-1,0,1\}^{2 \times \ell}$. 
For $i \in [s]$, we let 
\begin{align*}
  \Lambda_i^{++} & \coloneqq \{j \in [\ell]: P^\sigma_{ij} > 0,  \mathcal{S}_{ij} >0 \}\, &
\Lambda _i^{+-} &\coloneqq \{j \in [\ell]: P^\sigma_{ij} > 0 , \mathcal{S}_{ij}  <0  \}\,  
\\
 \Lambda_i^{-+} & \coloneqq \{j \in [\ell] : P^\sigma_{ij} < 0 , \mathcal{S}_{ij} >0 \}\, &  \Lambda_i^{--} &\coloneqq \{j \in [\ell]: P^\sigma_{ij} < 0 , \mathcal{S}_{ij} <0\}\, 
\end{align*}
 and consider also the sets
 \begin{align*}
\Lambda^{*+}_i  & \coloneqq \{j \in [\ell]:   \mathcal{S}_{ij} >0 \}\,,
&  
 \Lambda^{*-}_i  & \coloneqq \{j \in [\ell]:   \mathcal{S}_{ij} <0 \}\,, 
 & \Lambda_i  & \coloneqq 
 \{j \in [\ell]:   \mathcal{S}_{ij}\neq  0  \} \, . 
 \end{align*}
We refer to  the collection of these sets  as the  \term{$\bm{\Lambda}$-sets} of $(P,\sigma,\mS)$. 
\end{definition}

Observe that  $\Lambda_i=\varnothing$ if only if the $i$-th row of $\mS$ is zero and that if $\mS$ feasible, then   $\Lambda_i^{*+}   \neq \varnothing$ and $\Lambda_i^{*-}   \neq \varnothing$ whenever  $\Lambda_{i} \neq \varnothing$.

 \begin{definition}\label{def:ID}
Given  $P\in \R^{s\times \ell}$ and sign matrices $\mathcal{S}\in \{-1,0,1\}^{s \times \ell}$ and $\sigma \in \{-1,0,1\}^{2 \times \ell}$, 
consider the $\Lambda$-sets of $(P,\sigma,\mS)$ from \Cref{def:Subsets}.
Associated with these data we define the disjoint sets
 \begin{equation*}
 \begin{aligned}
 I^+ & \coloneqq \Big\{i \in [s]  : \Lambda_i^{*-} = \Lambda_i^{--}\neq \varnothing, \; \Lambda_i^{++} \neq \varnothing \Big\}\,   \\
I^- & \coloneqq \Big\{i \in [s]  : \Lambda_i^{*+} = \Lambda_i^{-+} \neq \varnothing, \;  \Lambda_i^{+-} \neq \varnothing\Big\}\, 
\end{aligned}
\end{equation*}
and for $i\in I^+\cup I^-$ the cone
\begin{equation*}
 \begin{aligned}
 \mD_i & \coloneqq \begin{cases} 
  \Big\{\rho \in \R^m   :  \max_{j \in \Lambda_i^{++}}\frac{\sigma_{2j}}{\sigma_{1j}}\, e^{\rho_{s + j}} \leq \min_{j \in \Lambda_i^{--}}\frac{\sigma_{2j}}{\sigma_{1j}}\, e^{\rho_{s + j}}  \Big\}\,  & \quad \text{if }i\in I^+  \\[8pt]
  \Big\{\rho \in \R^m   : \max_{j \in \Lambda_i^{-+}}\frac{\sigma_{2j}}{\sigma_{1j}}\, e^{\rho_{s + j}} \leq \min_{j \in \Lambda_i^{+-}}\frac{\sigma_{2j}}{\sigma_{1j}}\, e^{\rho_{s + j}} \Big\}   & \quad\text{if }i\in I^-.
 \end{cases}
\end{aligned}
\end{equation*}
\end{definition}

 Observe that the restrictions on $\rho$ for belonging to $\mD_i$  involve only  the last $\ell$ entries of $\rho$ and that $\sigma_{1j}\neq 0$ for all  $j$'s considered in the definition of $\mD_i$.  The requirement $\rho \in \mathcal{D}_i$ corresponds to the conditions   in \cite[Section 2.10]{Ji2011UniquenessOE} relying on the notion of \emph{segregated} index sets.
 
\begin{proposition}
\label[proposition]{prop:D}
Let $P\in \R^{s\times \ell}$ and $\sigma \in \{-1,0,1\}^{2 \times \ell}$. Given $\rho\in \R^m$, consider the $\Lambda$-sets of $(P,\sigma,\sign(A_{\rho}^{\sigma}))$ from \Cref{def:Subsets} and the associated sets $I^+,I^-,\{\mD_i\}_{i\in I^+\cup I^-}$ from \Cref{def:ID}. 

If   $\mu\in \R^\ell_{>0}$ satisfies $P^\sigma \mu>0$ and for some 
    $i\in I^+\cup I^-$ it holds
  \begin{equation}\label{eq:lambda_extra}
  \rho \in \mD_i \qquad \text{and}\qquad \sum_{j\colon 
 \mathcal{S}_{ij}=0} P_{ij}^{\sigma}\mu_j \leq 0\,,
  \end{equation}
then $\mu$ is not a  solution to the oriented characteristic system associated with $(P,\sigma,\rho)$.
\end{proposition}
\begin{proof}
We consider two cases.
If $i \in I^+$, then by letting $\theta \coloneqq \max_{j \in \Lambda_i^{++}}\frac{\sigma_{2j}}{\sigma_{1j}}\, e^{\rho_{s + j}}$ we have 
\[ \frac{\sigma_{2j_1}}{\sigma_{1j_1}}\, e^{\rho_{s + j_1}}\leq  \theta  \leq \frac{\sigma_{2j_2}}{\sigma_{1j_2}}\,  e^{\rho_{s + j_2}} \qquad \text{for all }j_1\in \Lambda_{i}^{++},\  j_2\in \Lambda_{i}^{--}\, .\] 
Note that for any $j\in \Lambda_{i}^{++}\cup \Lambda_{i}^{--}$, the signs of $P^\sigma_{ij}=\sigma_{1j}P_{ij}$ and of
\[ (A_\rho^\sigma)_{ij}=P_{ij}(\sigma_{1j}e^{\rho_i} - \sigma_{2j}e^{\rho_{s + j}})= P_{ij}^\sigma(e^{\rho_i} - \tfrac{\sigma_{2j}}{\sigma_{1j}}e^{\rho_{s + j}}  )\] are nonzero and agree. Therefore, 
 $e^{\rho_i} - \frac{\sigma_{2j}}{\sigma_{1j}}\, e^{\rho_{s + j}}>0$ and in particular $e^{\rho_i} - \theta>0$. It follows that 
$P^{\sigma}_{ij}(e^{\rho_i} - \frac{\sigma_{2j}}{\sigma_{1j}}\, e^{\rho_{s + j}}) \geq P^{\sigma}_{ij}(e^{\rho_i} - \theta)$ for any such $j$.  

Using that $\Lambda_i= \Lambda_i^{*+}    \cup\Lambda_i^{--}$,  that $(A_{\rho}^{\sigma})_{ij}>0$  and $P^{\sigma}_{ij}\leq 0$ if $j\in \Lambda_i \setminus (\Lambda_i^{++}\cup \Lambda_i^{--})$,  
and \eqref{eq:lambda_extra}, we have
\begin{multline*}
 (A_\rho^{\sigma} \mu)_i = 
 \sum_{j \in \Lambda_{i}} (A_\rho^{\sigma})_{ij} \mu_j \geq   \sum_{j \in \Lambda_{i}^{++} \cup  \Lambda_{i}^{--}} (A_\rho^{\sigma})_{ij} \mu_j 
 = \sum_{j \in \Lambda_{i}^{++} \cup  \Lambda_{i}^{--}}P^{\sigma}_{ij}(e^{\rho_i} - \tfrac{\sigma_{2j}}{\sigma_{1j}}e^{\rho_{s + j}})\mu_j   
    \\
     \geq   \Big( \sum_{j \in \Lambda_{i}^{++}\cup  \Lambda_{i}^{--}} P^\sigma_{ij}\mu_j \Big)(e^{\rho_i} - \theta)    \geq (P^{\sigma}\mu)_i (e^{\rho_i} - \theta)  >0 \, .
  \end{multline*}
Hence, $A_\rho^{\sigma} \mu \neq 0$. 

If $i \in I^{-}$, we proceed similarly to above.  By letting $\theta \coloneqq \min_{j \in \Lambda_i^{+-}}\frac{\sigma_{2j}}{\sigma_{1j}}\, e^{\rho_{s + j}}$ we have
\[
\frac{\sigma_{2j_1}}{\sigma_{1j_1}}\, e^{\rho_{s+j_1}}\leq  \theta  \leq \frac{\sigma_{2j_2}}{\sigma_{1j_2}}\, e^{\rho_{s+j_2}}
\qquad \text{for all }j_1\in \Lambda_{i}^{-+}, \ j_2\in \Lambda_{i}^{+-} \, . \]
For $j\in \Lambda_{i}^{-+}\cup \Lambda_{i}^{+-}$,  the signs of $P^\sigma_{ij}$ and $(A^\sigma_{\rho})_{ij}=P_{ij}(\sigma_{1j}e^{\rho_i} - \sigma_{2j}e^{\rho_{s + j}})=P_{ij}^\sigma(e^{\rho_i} - \tfrac{\sigma_{2j}}{\sigma_{1j}}e^{\rho_{s + j}})$ are nonzero but opposite. Thus, $ \frac{\sigma_{2j}}{\sigma_{1j}}\, e^{\rho_{s + j}} - e^{\rho_i}  >0$, $\theta-e^{\rho_i}>0$ and 
$ P^\sigma_{ij}(\theta - e^{\rho_i}  ) \leq P^\sigma_{ij}\big(\frac{\sigma_{2j}}{\sigma_{1j}}\, e^{\rho_{s + j}}- e^{\rho_i}\big)$. 

We use now that $\Lambda_i= \Lambda_i^{-+}  \cup \Lambda_i^{*-}$, that $(A_{\rho}^\sigma)_{ij}<0$
and $P^\sigma_{ij}\leq 0$ if $j\in \Lambda_i\setminus (\Lambda_i^{-+} \cup \Lambda_i^{+-})$,  and \eqref{eq:lambda_extra}, to obtain
\begin{multline*}
 0 <(\theta - e^{\rho_i}  ) (P^{\sigma}\mu)_i \leq   \sum_{j \in \Lambda_{i}^{+-} \cup  \Lambda_{i}^{-+}} P^{\sigma}_{ij} (\theta - e^{\rho_i}  )  \mu_j  
  \leq    \sum_{j \in \Lambda_{i}^{+-} \cup  \Lambda_{i}^{-+}} P^{\sigma}_{ij} (\tfrac{\sigma_{2j}}{\sigma_{1j}}e^{\rho_{s+j} }- e^{\rho_i}  )  \mu_j     
     \leq   -(A_\rho^\sigma \mu)_i\, .
  \end{multline*}
  Hence, $A_\rho^\sigma \mu\neq 0$. 
  
We have shown that $A_\rho^\sigma \mu\neq 0$ whenever  $P^\sigma \mu>0$ and $\mu>0$, and hence $\mu$ is not a solution to the oriented characteristic system associated with $(P,\sigma,\rho)$.
\end{proof}
 
\begin{definition}\label{def:feasible_ground_set}
Given $P \in \R^{s\times \ell}$,   $\sigma\in \{-1,0,1\}^{2\times \ell}$, and   $\mathcal{S} \in \{-1,0,1\}^{s \times \ell}$, recall the the set $\mG^\sigma$ from  \Cref{def:mGsigma}, the associated sets $I^+,I^-,\{\mD_i\}_{i\in I^+\cup I^-}$ from \Cref{def:ID}, and the matrices  $P^\sigma$ and $A_\rho^\sigma$ from \eqref{eq:oriented} for $\rho\in \R^{s+\ell}$. 

The associated \term{feasible ground set} $ \mathcal{C}_{\mS}^\sigma$ is defined to be
\begin{equation*} 
    \mathcal{C}_{\mS}^\sigma \coloneqq\bigcup_{J \subseteq I^+\cup I^- \colon \Gamma_J\neq \varnothing} \Big\{\rho \in \mathcal{G}^{\sigma} : \sign(A_{\rho}^\sigma) = \mathcal{S},\  J=\{i \in I^+\cup I^-: \rho\in \mathcal{D}_i\} \Big\} \, ,    
\end{equation*}
where 
\[ \Gamma_J \coloneqq \big\{\mu \in \mathbb{R}^{\ell}_{>0}  : P^{\sigma}\mu > 0, \; \sum_{j\colon 
 \mathcal{S}_{ij}=0} P_{ij}^{\sigma}\mu_j > 0\ \text{ for all }i\in J \big\}\,.\]
\end{definition}

 \begin{theorem}[Preclusion of multiple positive zeros]\label{thm:mono}
 Let $F  \in \R[a,b,x^\pm]^d$ be an augmented vertical system defined by a principal matrix. Consider a choice of row and column partitions and let  $P\in \R^{s\times \ell}$ be  the  associated simplified reduced matrix  from \eqref{def:simpif_reduced_matrix}. 

\begin{itemize}
    \item 
For every orientation $\sigma \in \{-1,0,1\}^{2 \times \ell}$   compatible with the column partition it holds  
\[ \mE^\sigma_F\  \subseteq \ \bigcup_{\mS} \mathcal{C}_{\mathcal{S}}^\sigma\, , \]
where the union is over all feasible matrices $\mathcal{S}\in \{-1,0,1\}^{s\times \ell}$.

\item If $\mathcal{C}_{\mathcal{S}}^\sigma = \varnothing$ for all orientations $\sigma \in \{-1,0,1\}^{2 \times \ell}$   compatible with the column partition and  all  feasible matrices $\mathcal{S}\in \{-1,0,1\}^{s\times \ell}$, then $F$ does not admit multiple positive zeros for any choice of parameter values.
 \end{itemize}
\end{theorem}
\begin{proof}
If $\rho\in \mE^\sigma_F$, then $\rho\in \mG^\sigma$ and the oriented characteristic system has a solution $\mu$. This requires $\sign(A_{\rho}^\sigma)$ to be feasible by \Cref{lem:feasible}, $P^\sigma\mu>0$, and 
$\mu\in \Gamma_J$ with $J=\{i \in I^+\cup I^-: \rho\in \mathcal{D}_i\}$ by \Cref{prop:D}. Hence  $\Gamma_J\neq \varnothing$ and $\rho\in \mC_{\sign(A_{\rho}^\sigma)}^\sigma$. 

The second statement follows from the first, using  \Cref{thm:first_characterization,thm:mainsimplification}.
 \end{proof}
 
\begin{remark}
\cref{thm:mono} is analogous to \cite[Proposition 2.10.14]{Ji2011UniquenessOE}, which is one of the key results in the higher deficiency algorithm.
\end{remark}

 \subsection{Characterization of multiple positive zeros}
 \cref{thm:mono} gives a necessary condition for the existence of multiple positive zeros. After imposing some conditions on the simplified reduced matrix $P$,   we show below in \Cref{thm:multiSimplified} that the converse to \cref{thm:mono} also holds. 
 The following   technical lemma will be key in the proof of \Cref{thm:multiSimplified}.

\begin{lemma}\label[lemma]{mainlemma}
Let $P\in \R^{s\times \ell}$, $\sigma \in \{-1,0,1\}^{2 \times \ell}$,  $\mathcal{S} \in \{-1,0,1\}^{s \times \ell}$  feasible, and consider the associated $\Lambda$-sets from \Cref{def:Subsets}. 
Assume   $P^\sigma\mu^*>0$ for some $\mu^*\in \R^\ell_{>0}$ and let 
$\rho \in \mathcal{C}^\sigma_{\mS}$. 
Then, there exists
 $Q \in \mathbb{R}^{s \times \ell}$ with $\sign(Q) = \mathcal{S}$ and $\mu\in \R^\ell_{>0}$ such that  
       \begin{equation}\label{eq2MainLemma} 
       Q \mu = 0\,,\quad P^\sigma \mu>0\,, \text{ and }
         \sum_{j\colon P^\sigma_{ij}\mS_{ij}\neq 0\ }\frac{Q_{ij} \, \mu_j}{e^{\rho_i} - \frac{\sigma_{2j}}{\sigma_{1j}}\, e^{\rho_{s + j}}}  + \sum_{j\colon \mS_{ij}=0}P^{\sigma}_{ij}  \mu_j  > 0  \, \text{ for all }i \in [s]\,.
      \end{equation}
\end{lemma}
\begin{proof}
As $\rho \in \mathcal{C}_{\mS}^\sigma$, $\sign(A_{\rho}^\sigma) = \mathcal{S}$ and 
there exists $\mu\in \Gamma_J$ where
$J\coloneqq \{i \in I^+\cup I^-: \rho\in \mathcal{D}_i\}$. 
In particular $P^\sigma \mu>0$. 
As $\mS$ is feasible, by \cref{lem:feasible}, there exists $\widetilde{Q}\in \mathbb{R}^{s \times \ell} $ with $\sign(\wQ) = \mathcal{S}$  such that  $\wQ \mu = 0$. 
We construct now a parametric matrix  $Q(\omega)$ for $\omega = (\omega_1,\dots,\omega_s) \in \mathbb{R}_{>0}^s$ such that, for every $i\in [s]$,  $\omega_i$ only modifies the $i$-th row and it holds that 
\begin{equation}\label{eq:row_mod}
\sign(\wQ_{ij})=\sign(Q(\omega)_{ij}) \quad\text{for all } j\in [\ell]\, ,   \qquad (Q(\omega)\mu)_i=0 \, , 
\end{equation} 
and \eqref{eq2MainLemma} holds after choosing  $\omega_i$ either small or large enough.

If $i \in [s]$ is  such that 
\begin{equation}\label{eq:omega_small}
    i \in J\,  \quad\text{or}\quad  \Lambda_i^{++} \cup \Lambda_i^{+-} = \varnothing\, ,
\end{equation}  
then $\sum_{j\colon \mS_{ij}=0}P^{\sigma}_{ij}  \mu_j>0\,$. 
Indeed, this holds by the choice of $\mu$ if $i \in J$,  and if $\Lambda_i^{++} \cup \Lambda_i^{+-} = \varnothing$, then we have
   \[0 < (P^{\sigma}\mu)_i = \sum_{j \in \Lambda_{i}^{-+} \cup \Lambda_{i}^{--}}P^{\sigma}_{ij}\mu_j + \sum_{j\colon \mS_{ij}=0}P_{ij}^{\sigma}\mu_j \leq \sum_{j\colon \mS_{ij}=0}P_{ij}^{\sigma}\mu_j\, .  \]
By letting
\begin{equation*} 
Q(\omega)_{ij} \coloneqq \omega_i\, \wQ_{ij}, \qquad j \in \Lambda_{i} \, ,
\end{equation*}
\eqref{eq:row_mod} clearly holds for $\omega_i>0$  and the left hand side of \eqref{eq2MainLemma} becomes
\begin{equation} \label{eq:firstModificationCheck}
\omega_i \sum_{j \colon P^\sigma_{ij}\mS_{ij}\neq 0\ }\frac{\wQ_{ij} \, \mu_j}{e^{\rho_i} - \frac{\sigma_{2j}}{\sigma_{1j}}\, e^{\rho_{s + j}}} + \sum_{j \colon \mS_{ij}=0}P^{\sigma}_{ij}  \mu_j\, . 
\end{equation}
 As the constant term is positive, we can choose $\omega_i > 0$ small enough such that \eqref{eq:firstModificationCheck} is positive.

 \smallskip
If $i\in [s]$ does not satisfy \eqref{eq:omega_small}, then 
we  pick two indices $j_1,j_2$ such that $\mS_{ij_1}>0$ and $\mS_{ij_2}<0$ as follows:
\begin{enumerate}
\item If $\Lambda_i^{*-} \neq \Lambda_i^{--}$ and $\Lambda_i^{*+} \neq \Lambda_i^{-+}$, then as \eqref{eq:omega_small} does not hold, we can pick 
$j_{1} \in \Lambda_i^{*+} \setminus \Lambda_i^{-+}$ and $j_{2} \in \Lambda_i^{*-} \setminus \Lambda_i^{--}$ such that at least one of the indices lies in $\Lambda_{i}^{++} \cup \Lambda_{i}^{+-}$.

\item If $\Lambda_i^{*-}= \Lambda_i^{--}$, then $\Lambda_{i}^{--}  \neq \varnothing$  and $\Lambda_{i}^{++} \neq \varnothing$,  as $\mS$ is feasible and  \eqref{eq:omega_small} does not hold. Hence  $i \in I^{+}$ and as $i\notin J$, $\rho\notin \mD_i$ and   we can find $j_{1} \in \Lambda_i^{++}$ and $j_{2} \in \Lambda_i^{--}$ such that 
\[
\tfrac{\sigma_{2j_2}}{\sigma_{1j_2}}\, e^{\rho_{s+j_{2}}} < \tfrac{\sigma_{2j_1}}{\sigma_{1j_1}}\, e^{\rho_{s+j_{1}}}\, .\]

\item  If
    $\Lambda_i^{*+} = \Lambda_i^{-+}$,  then $\Lambda_{i}^{-+}  \neq \varnothing$ and $\Lambda_{i}^{+-} \neq \varnothing$, again as $\mS$ is feasible and  \eqref{eq:omega_small} does not hold.  Hence $i \in I^{-}$ and as $\rho\notin \mD_i$,  we can pick
$j_{1} \in \Lambda_{i}^{-+}$ and $j_{2} \in \Lambda_{i}^{+-}$ such that 
\[ 
\tfrac{\sigma_{2j_2}}{\sigma_{1j_2}}\, e^{\rho_{s+j_{2}}} < \tfrac{\sigma_{2j_1}}{\sigma_{1j_1}}\, e^{\rho_{s+j_{1}}} \, .\]
\end{enumerate}

These three cases cover all remaining cases. Note that $\sigma_{1j_1}, \sigma_{1j_2}\neq 0$ in cases (2) and (3). 
We define
   \[Q(\omega)_{ij_{1}} \coloneqq \wQ_{ij_{1}} + \omega_i\, ,  \qquad 
   Q(\omega)_{ij_{2}} \coloneqq \wQ_{ij_{2}} - \frac{\mu_{j_{1}}}{\mu_{j_{2}}}\, \omega_i, \qquad
    Q(\omega)_{ij} \coloneqq \wQ_{ij}, \quad \text{if }j\neq j_1,j_2 .\]
It is easily seen that the conditions in  \eqref{eq:row_mod} hold. We further have
\begin{equation}
\label{eqthetaMainLemma}
\sum_{j\colon P^\sigma_{ij}\mS_{ij}\neq 0\ }\frac{Q(\omega)_{ij}\mu_j}{e^{\rho_i} - \frac{\sigma_{2j}}{\sigma_{1j}}\, e^{\rho_{s + j}}}  
= 
\sum_{j\colon P^\sigma_{ij}\mS_{ij}\neq 0\ }\frac{\wQ_{ij}\mu_j}{e^{\rho_i} - \frac{\sigma_{2j}}{\sigma_{1j}}\, e^{\rho_{s + j}}}   + \theta_i(\rho,\mu)\omega_i \, 
\end{equation}
for a function $\theta_i(\rho,\mu)$ that we now describe, arising from   the indices $j_1,j_2$. 
We will use that if $\sigma_{1j}\neq 0$,  then
\begin{equation}\label{eq:signs}
\sign(e^{\rho_i} - \tfrac{\sigma_{2j}}{\sigma_{1j}}\, e^{\rho_{s + j}}) =\sign(P^{\sigma}_{ij}) \sign(A_\rho^\sigma)=
\sign(P^{\sigma}_{ij})\sign(\mS_{ij})\, .
\end{equation}

In case (1), if one of $j_1,j_2$ does not belong to $\Lambda_{i}^{++} \cup \Lambda_{i}^{+-}$, then we have
\[ \theta_i(\rho,\mu) = 
\begin{cases}
\frac{\mu_{j_1}}{e^{\rho_i} - \frac{\sigma_{2j_1}}{\sigma_{1j_1}}e^{\rho_{s + j_1}}} & \quad \text{if } j_1 \in  \Lambda_i^{++}, \ j_2\notin  \Lambda^{+-}_i \text{ (as $P_{ij_2}^\sigma=0$)} \\[15pt]
-\frac{\mu_{j_1}}{e^{\rho_i} - \frac{\sigma_{2j_2}}{\sigma_{1j_2}}e^{\rho_{s + j_2}}} &  \quad \text{if }  j_1 \notin  \Lambda_i^{++}, \ j_2 \in  \Lambda_i^{+-} \text{ (as $P_{ij_1}^\sigma=0$)}\,.
\end{cases} \]
In both cases, $\theta_i(\rho,\mu)$ is positive by \eqref{eq:signs}. In the remaining cases, 
 \[ 
 \theta_i(\rho,\mu) = \frac{\mu_{j_{1}}}{e^{\rho_i} - \frac{\sigma_{2j_1}}{\sigma_{1j_1}}e^{\rho_{s+j_{1}}}} - \frac{\mu_{j_{1}}}{e^{\rho_i} - \frac{\sigma_{2j_2}}{\sigma_{1j_2}}e^{\rho_{s+j_{2}}}} 
 = \frac{\mu_{j_{1}}(\frac{\sigma_{2j_1}}{\sigma_{1j_1}}e^{\rho_{s+j_{1}}} - \frac{\sigma_{2j_2}}{\sigma_{1j_2}}e^{\rho_{s+j_2}})}{(e^{\rho_i} - \frac{\sigma_{2j_1}}{\sigma_{1j_1}}e^{\rho_{s+j_{1}}})(e^{\rho_i} - \frac{\sigma_{2j_2}}{\sigma_{1j_2}}e^{\rho_{s+j_{2}}})}\,. 
 \]
For the cases (2) and (3),  the denominator of $ \theta_i(\rho,\mu) $ is positive  as 
the signs of  the factors agree by \eqref{eq:signs}. The numerator is also positive by assumption. In the case (1) with $j_1,j_2\in \Lambda_{i}^{++} \cup \Lambda_{i}^{+-}$, the denominator is negative as the signs of the factors differ. The numerator is also negative, as by \eqref{eq:signs}
\[
\frac{\sigma_{2j_1}}{\sigma_{1j_1}}e^{\rho_{s+j_1}} < e^{\rho_i} < \frac{\sigma_{2j_2}}{\sigma_{1j_2}}e^{\rho_{s+j_2}}\, .
\]

 Therefore, the coefficient of $\omega_i$ in \eqref{eqthetaMainLemma} is positive, which implies that   we can choose $\omega_{i}$ large enough such that $\sum_{j\colon P^\sigma_{ij}\mS_{ij}\neq 0\ }\frac{Q(\omega)_{ij}\mu_j}{e^{\rho_i} - \frac{\sigma_{2j}}{\sigma_{1j}}e^{\rho_{s + j}}}   +\sum_{j \colon \mS_{ij}=0}P_{ij}^\sigma  \mu_j  >0$. 

 To summarise, we have shown that there exists $\mu\in \R^\ell_{>0}$ such that $P^\sigma \mu>0$ and for a suitably chosen $\omega \in \mathbb{R}^{s}_{>0}$,  $Q(\omega)\mu = 0$, 
 $\sign(Q(\omega))=\mS$,  and \eqref{eq2MainLemma} holds for all $i \in [s]$.
\end{proof}

Note that the proof of \Cref{mainlemma} does not rely on $\rho$ belonging to $\mathcal{G}^\sigma$. The sparsity of $P$ we referred to earlier is characterized in terms of the bipartite graph defined by the rows and columns of $P$.

\begin{definition}
\label[definition]{forest}
Given  $P \in \R^{s\times \ell}$,  the  bipartite graph \( G_P = (V_P, E_P) \) associated with  $P$ is defined by:
\begin{itemize}
    \item $V_P = V_1 \sqcup V_2$ with $V_1 = \{1, \dots, s\}$ and $V_2 = \{1, \dots, \ell\}$.
    \item $(i, j) \in E_P$ for $i\in V_1$ and $j\in V_2$ if and only if $P_{i j} \neq 0$. 
\end{itemize}
We say that $P$ \term{induces a forest} if  $G_P$ is a forest, that is, each connected component  is a tree (an acyclic, connected subgraph).
\end{definition}

Although the condition that $P$ induces a forest might be in general quite restrictive, it holds for the (simplified) reduced matrices of many augmented vertical systems coming from (realistic) reaction networks. As the graph $G_P$ associated with a simplified reduced matrix $P$ can be seen as a subgraph of the graph $G_{\oP}$ 
associated with the reduced matrix $\oP$ from \eqref{eqP}, if $\oP$ induces a forest, then so does $P$. 
 
\begin{theorem}
\label{thm:multiSimplified}
 Let $F  \in \R[a,b,x^\pm]^d$ be an augmented vertical system defined by a principal matrix. Consider a choice of row and column partitions and let  $P\in \R^{s\times \ell}$ be  the  associated simplified reduced matrix  from \eqref{def:simpif_reduced_matrix}. 
  Assume that $P$ induces a forest. 
  
For every orientation $\sigma \in \{-1,0,1\}^{2 \times \ell}$   compatible with the column partition  it holds   that 
\[ \mE^\sigma_F\  = \ \bigcup_{\mS} \mathcal{C}_{\mathcal{S}}^\sigma\, , \]
where the union is over all feasible matrices $\mathcal{S}\in \{-1,0,1\}^{s\times \ell}$.
  \end{theorem}
\begin{proof} 
The inclusion $\subseteq$ was shown in \Cref{thm:mono}. For the reverse inclusion, given $\rho \in \mathcal{C}_{\mathcal{S}}^\sigma$ with $\mS$ feasible,  $\rho \in \mE^\sigma_F$ follows if we show that the oriented characteristic system associated with $(P,\sigma,\rho)$ has a solution.

\cref{mainlemma} tells us that there exist $\bar{\mu}\in \R^\ell_{>0}$  and $Q \in \mathbb{R}^{s \times \ell}$ with $\sign(Q) = \mathcal{S}$ such that   $Q \bar{\mu} = 0$, $P^\sigma\bar{\mu} > 0$, and 
\[
      \sum_{j\colon P^\sigma_{ij}\mS_{ij}\neq 0\ }\frac{Q_{ij} \, \bar{\mu}_j}{e^{\rho_i} - \frac{\sigma_{2j}}{\sigma_{1j}}\,e^{\rho_{s + j}}}  + \sum_{j\colon \mS_{ij}=0}P^{\sigma}_{ij}  \bar{\mu}_j  > 0  \quad \text{for all }i \in [s]\,.
\]
     We will now modify $\bar{\mu}$ to obtain  $\mu\in \R^\ell_{>0}$ satisfying $A_\rho^\sigma \mu=0$ and $P^\sigma\mu>0$.

 First of all, observe that 
as  $(A_\rho^\sigma)_{ij}=0$ and $P^\sigma_{ij}=0$ if $i$ and $j$ are not in the same connected component of $G_P$, 
  the matrices $A_\rho^\sigma$ and $P^\sigma$ have a block structure given by the set of nodes in each connected component of $G_P$. This means that, without loss of generality, we can assume that $G_P$ is connected and hence a tree.

Consider the node $1\in [s]$ of $G_P$. 
 As the graph is acyclic, for every node $i\in  [s]$, there is a unique path joining $1$ and $i$
 \[ 1= i_0  \mathdash j_0  \mathdash i_1  \mathdash j_1  \mathdash \cdots  \mathdash i_{u}  \mathdash j_u  \mathdash i_{u+1}= i  \]
 with $i_k\in [s]$, $j_k\in [\ell]$ and   without repeated nodes. Let 
 \[ \omega_i \coloneqq \prod_{k=0}^u \ \frac{Q_{i_k,j_k}}{(A_\rho^\sigma)_{i_k,j_k}}\,  \frac{(A_\rho^\sigma)_{i_{k+1},j_k}}{Q_{i_{k+1},j_k}} \   > 0\, , \]
 with the usual convention that $\tfrac{0}{0}=1$. As $\sign(A_\rho^\sigma)=\sign(Q)$, we are guaranteed that $\omega_i$ is positive. 
 
 For $j\in [\ell]$, let $\iota(j)\in [s]$ be the only node that is adjacent to $j$ in the unique path from $1\in [s]$ to $j$. 
We define 
  \begin{equation*}\label{equationmu}
  \mu_j \coloneqq   \omega_{\iota(j)} \,  \frac{Q_{\iota(j), j}}{(A_{\rho}^\sigma)_{\iota(j),j}}\, \bar{\mu}_j  \quad > 0 \, .
      \end{equation*}

Let us see that $A_\rho^\sigma\mu=0$ and $P^\sigma \mu>0$. 
Fix $i\in [s]$. For a given $j\in [\ell]$ with $P_{ij}\neq 0$, if $\iota(j)\neq i$, then  the unique path from $1$ to $i$ is precisely obtained by appending $\iota(j) \mathdash j \mathdash i$ to the unique path from $1$ to $\iota(j)$. Hence
\[ \omega_{i} = \omega_{\iota(j)}\,  \frac{Q_{\iota(j),j}}{(A^\sigma_\rho)_{\iota(j),j}}\,  \frac{(A^\sigma_\rho)_{ij}}{Q_{ij}} \quad \Rightarrow \quad 
 \omega_{\iota(j)} = \omega_{i} \,  \frac{(A_\rho^\sigma)_{\iota(j),j}}{Q_{\iota(j),j}} \,  \frac{Q_{ij}}{(A^\sigma_\rho)_{ij}}\, .  \] 
Using this, we have that for any matrix $B\in \R^{s\times \ell}$ with support included in the support of $P$, it holds
\begin{align*}
(B \mu)_i & = \sum_{ j=1}^\ell  B_{ij}  \, \omega_{\iota(j)} \,  \frac{Q_{\iota(j), j}}{(A^\sigma_{\rho})_{\iota(j),j}}\, \bar{\mu}_j \\ & =  
\sum_{j=1 , \iota(j)=i}^\ell  B_{ij}  \, \omega_{i} \,  \frac{Q_{i j}}{(A^\sigma_{\rho})_{ij}}\,   \bar{\mu}_j  +
\sum_{j=1, \iota(j)\neq i }^\ell B_{ij}  \,\omega_{i}  \,  \frac{(A^\sigma_\rho)_{\iota(j),j}}{Q_{\iota(j),j}} \,  \frac{Q_{ij}}{(A^\sigma_\rho)_{ij}}  \,  \frac{Q_{\iota(j), j}}{(A^\sigma_{\rho})_{\iota(j),j}}\, \bar{\mu}_j
\\ & =  
\omega_{i}  \sum_{ j=1}^\ell  B_{ij}  \,   \frac{Q_{ij}}{(A^\sigma_{\rho})_{ij}}\,   \bar{\mu}_j \, .
\end{align*}
Considering $B=A_\rho^\sigma$, we obtain 
\begin{align*}
(A^\sigma_\rho \mu)_i & = \omega_{i}  \sum_{ j=1}^\ell  (A^\sigma_\rho)_{ij}  \,   \frac{Q_{ij}}{(A^\sigma_{\rho})_{ij}}\,   \bar{\mu}_j
 = 
\omega_{i} \sum_{j=1}^\ell \,  \,  Q_{ij} \bar{\mu}_j  =0\, .
\end{align*}
Similarly, for $B=P^\sigma$, we have
 \begin{align*}
(P^\sigma \mu)_i & = \omega_{i}  \sum_{ j=1}^\ell  P^\sigma_{ij}  \,   \frac{Q_{ij}}{(A^\sigma_{\rho})_{ij}}\,   \bar{\mu}_j 
=
 \omega_{i}\  \Big( \ \sum_{ j\colon P^\sigma_{ij}\mS_{ij}\neq 0\ }    \,    \frac{Q_{ij}}{ e^{\rho_i} -\tfrac{\sigma_{2j}}{\sigma_{1j}}\,  e^{\rho_{s+j}} }\,   \bar{\mu}_j 
 + \sum_{ j\colon \mS_{ij}=0}    \,   P^\sigma_{ij}  \,   \bar{\mu}_j\ \Big)  > 0 \, .
 \end{align*} 
This concludes the proof. 
\end{proof}

\begin{remark}
\cref{thm:multiSimplified} generalizes Proposition 2.11.6 of \cite{Ji2011UniquenessOE}, after adapting to our language.
\end{remark}

Combining our results, we obtain the following characterization of augmented vertical systems that admit multiple positive zeros.

\begin{theorem}[Characterization of multiple positive zeros when $P$ induces a forest]\label{thm:full}
 Let $F  \in \R[a,b,x^\pm]^d$ be an augmented vertical system defined by a principal matrix. Consider a choice of row and column partitions and let  $P\in \R^{s\times \ell}$ be  the  associated simplified reduced matrix  from \eqref{def:simpif_reduced_matrix}. 
  Assume that $P$ induces a forest. 
 
 Then, $F$ admits multiple positive zeros  if and only if $ \mathcal{C}_{\mS}^\sigma \neq \varnothing$ for an orientation $\sigma \in \{-1,0,1\}^{2 \times \ell}$ compatible with the column partition and a feasible matrix  $\mathcal{S}\in \{-1,0,1\}^{s \times \ell}$.
\end{theorem}
\begin{proof}
Follows from 
\Cref{thm:mainsimplification,thm:first_characterization,thm:multiSimplified}.
\end{proof}

When considering the singleton partitions,  \cref{rk:singleton} and the definition of $\mathcal{C}_{\mathcal{S}}^{\sigma_+}$ from \Cref{def:feasible_ground_set} give the following corollary that applies when $\oP$ induces a forest. 

\begin{corollary}[Characterization of multiple positive zeros when $\oP$ induces a forest]\label[corollary]{cor:full}
 Let $F=( C (a\star x^M), Lx-b) \in \R[a,b,x^\pm]^d$ be an augmented vertical system with $C\in \R^{\bs\times \bar{m}}$ principal and $L$ of full row rank.  Assume that the reduced matrix  $\oP\in \R^{\bs\times \bl}$ from \eqref{eqP} induces a forest.
  The following statements are equivalent:
 \begin{enumerate}[label=(\roman*)]
 \item $F$ admits multiple positive zeros.
 \item There exists a feasible matrix $\mathcal{S}\in \{-1,0,1\}^{\bs \times \bl}$  such that $\mathcal{C}_{\mathcal{S}}^{\sigma_+}\neq \varnothing$. In other words, there exists $\rho\in M^\top(\mathcal{O}_L)$ such that $\sign(A_\rho)=\mS$ and 
$\Gamma_{J} \neq \varnothing$ for 
 $J=\{i \in I^+\cup I^-: \rho\in \mathcal{D}_i\}$.
\end{enumerate}
\end{corollary}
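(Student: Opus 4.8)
The plan is to derive Corollary \ref{cor:full} directly from Theorem \ref{thm:full} by specializing to the singleton partitions and unwinding the definitions, so essentially no new argument is required. Concretely, for the singleton partitions we have $s=\bs$, $\ell=\bl$, $r,c$ the identity maps, $P=P^{\sigma_+}=\oP$, $A_\rho^{\sigma_+}=A_\rho$, $m=\bar m$, and by Remark \ref{rk:singleton} the oriented ground set is $\mathcal{G}^{\sigma_+}=M^\top(\mathcal{O}_L)$, while $\sigma_+$ is the only orientation compatible with $\alpha$ (since $\mathcal{U}^\alpha_1=[\bl]$). Since $\oP$ induces a forest by hypothesis, Theorem \ref{thm:full} applies verbatim and gives that (i) is equivalent to the existence of a feasible $\mathcal{S}\in\{-1,0,1\}^{\bs\times\bl}$ with $\mathcal{C}_{\mathcal{S}}^{\sigma_+}\neq\varnothing$; this is exactly the first sentence of part (ii).

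The remaining work is to check that, under these specializations, the condition ``there exists feasible $\mathcal{S}$ with $\mathcal{C}_{\mathcal{S}}^{\sigma_+}\neq\varnothing$'' can be rephrased as the concrete statement about a single $\rho$ given in (ii). First I would observe that if $\mathcal{C}_{\mathcal{S}}^{\sigma_+}\neq\varnothing$ then there is $\rho\in\mathcal{C}_{\mathcal{S}}^{\sigma_+}$, and by the definition \eqref{eq:cone} of the feasible ground set this means $\rho\in\mathcal{G}^{\sigma_+}=M^\top(\mathcal{O}_L)$, $\sign(A_\rho^{\sigma_+})=\mathcal{S}$, and $\rho\notin\mathcal{D}_{\mathcal{S}}^{\sigma_+}$; unwinding the definition of $\mathcal{D}_{\mathcal{S}}^{\sigma}$, the last condition says precisely that either $I_\rho^+\cup I_\rho^-=\varnothing$ or $\bigcap_{i\in I_\rho^+\cup I_\rho^-}\Gamma_{\Lambda,i}\neq\varnothing$. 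Conversely, given such a $\rho$, one sets $\mathcal{S}\coloneqq\sign(A_\rho)$; I would then note that for the singleton orientation $\sigma_+$ the sets $\Lambda_i^{0\pm}$ are automatically empty for $\mathcal{S}=\sign(A_\rho)$, because $P_{ij}=\oP_{ij}=0$ forces $(A_\rho)_{ij}=0$, so the feasibility conditions on $\mathcal{S}$ with respect to $(\oP,\sigma_+)$ reduce to: every nonzero row of $\mathcal{S}$ contains both a $+1$ and a $-1$, and every row has a $+1$ — and this follows automatically since $\sign(A_\rho)$ is realized by the genuine matrix $A_\rho$ with $\oP\mu>0$ for appropriate $\mu$, or more simply by invoking Proposition \ref{prop:feasible}. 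Hence $\mathcal{S}$ is feasible and $\rho\in\mathcal{C}_{\mathcal{S}}^{\sigma_+}$.

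I do not anticipate a genuine obstacle here; the corollary is a direct specialization and bookkeeping exercise. The one point that needs a little care is justifying that $\mathcal{S}=\sign(A_\rho)$ is feasible with respect to $(\oP,\sigma_+)$ in the converse direction — one must make sure the feasibility hypothesis is not vacuously assumed. This is handled by the remark that $I_\rho^+,I_\rho^-$ and all the $\Gamma$-sets are defined relative to the $\Lambda$-sets of $(\oP,\sigma_+,\mathcal{S})$, and that whenever a solution-type structure is present (as encoded by $\rho\notin\mathcal{D}_{\mathcal{S}}^{\sigma_+}$ together with $\sign(A_\rho)=\mathcal{S}$), the $i$-th row of $A_\rho$ sums to zero against a positive vector $\mu$ with $\oP\mu>0$, which forces each nonzero row to contain both signs and each row to contain a positive entry — exactly the definition of feasibility. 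With this in hand, the two displayed reformulations in (ii) are literally the statements ``$\exists$ feasible $\mathcal{S}$ with $\mathcal{C}_{\mathcal{S}}^{\sigma_+}\neq\varnothing$'' and ``$\exists\,\rho\in M^\top(\mathcal{O}_L)$ with the stated properties,'' and Theorem \ref{thm:full} closes the argument.
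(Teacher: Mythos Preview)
Your core approach is correct and is exactly the paper's: specialize Theorem~\ref{thm:full} to the singleton partitions, invoke Remark~\ref{rk:singleton} so that $P=\oP$, $A_\rho^{\sigma_+}=A_\rho$, $\mathcal{G}^{\sigma_+}=M^\top(\mathcal{O}_L)$, and $\sigma_+$ is the only admissible orientation, and then unpack the definition of $\mathcal{C}_{\mathcal{S}}^{\sigma_+}$.

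The final paragraph, however, contains a circular argument that you should drop. You assert that $\rho\notin\mathcal{D}_{\mathcal{S}}^{\sigma_+}$ together with $\sign(A_\rho)=\mathcal{S}$ produces a vector $\mu>0$ with $A_\rho\mu=0$ and $\oP\mu>0$, and then feed this into Proposition~\ref{prop:feasible} to conclude that $\mathcal{S}$ is feasible. But $\rho\notin\mathcal{D}_{\mathcal{S}}^{\sigma_+}$ is only a \emph{necessary} condition for such a $\mu$ to exist; the implication you are invoking is precisely Theorem~\ref{thm:multiSimplified}, which already \emph{assumes} $\mathcal{S}$ feasible among its hypotheses. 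So the step is circular. The good news is that this detour is unnecessary: in the ``In other words'' sentence of (ii), the symbol $\mathcal{S}$ is the same feasible $\mathcal{S}$ introduced in the first sentence, so feasibility remains part of the hypothesis and the second sentence is nothing more than the literal unwinding of $\mathcal{C}_{\mathcal{S}}^{\sigma_+}\neq\varnothing$ (using $\mathcal{G}^{\sigma_+}=M^\top(\mathcal{O}_L)$ and the definition of $\mathcal{D}_{\mathcal{S}}^{\sigma_+}$). Once you remove the attempt to manufacture feasibility, your argument is complete and coincides with the paper's.
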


\begin{remark}
If $P$ induces a forest, then $\mC_\mS^\sigma\neq \varnothing$
implies   $\mE_F\neq \varnothing$.  Hence by \Cref{thm:mono}, the set $\mC_{\mS'}^{\sigma_+}$ obtained by considering the row and column singleton partitions, is also nonempty for some feasible sign matrix $\mS'\in \{-1,0,1\}^{\bar{s}\times \bar{\ell}}$. 
The reverse implication might not hold if $\oP$ does not induce a forest.     
\end{remark}

\begin{remark}[Witnesses of multiple positive zeros]\label{rk:witness}
In the setting of \Cref{thm:full}, given suitable $\mS$, $\sigma$, and $\rho\in \mathcal{C}_{\mS}^\sigma$,  we can find parameter values $a,b$ for which the system $F_{a,b}$ has two distinct positive zeros. 
Explicitly, we follow these steps:
\begin{itemize}
    \item As $\rho\in \mE^\sigma_F\subseteq  \mG^\sigma$, \Cref{thm:mainsimplification} gives  a $\bar{\rho}\in \mE_F\subseteq M^\top(\mO_L)$. We will see in \Cref{sec:comput} that $\delta\in \mathcal{O}_L$ such that $\bar{\rho}=M^\top(\delta)$ is readily obtained when finding $\rho$. 
    \item Find $v$ such that $\sign(v)=\sign(\delta)$. 
     \item Find a solution $\mu\in \R^{\bar{\ell}}_{>0}$ to  the  characteristic system at $\bar{\rho}$.
   
\item A witness for multiple positive zeros is given by $\Phi(v,\delta,\mu)$ with $\Phi$ from \eqref{eq:Phi}. 
\end{itemize}  

This procedure is illustrated in \Cref{example:hybrid_histidine_kinase}. 
\end{remark}

\section{Examples}
\label{sec:examples}
In this section we illustrate using detailed examples how to apply \Cref{thm:full,thm:mono,cor:full}
to determine whether an augmented vertical system admits multiple  positive zeros. 

\begin{example}
    Let $F = C(a \star x^M)$ be the vertical system with
    \[ C = \begin{pmatrix}
        1 & 0 & -1 & 1 & 0 \\
        0 & 1 & -1 & 1 & -1 
    \end{pmatrix}\,, \quad M = \begin{pmatrix}
        0 & -1 & 1 & 2 & 2 \\
        0 & 0 & 1 & 1 & 1
    \end{pmatrix}\,.\]
We  have in particular that $n=2$, $\bs=2$,  $\bar{m}=5$, $L$ is empty hence $\mO_L=\R^2\setminus \{0\}$, and the matrix $\oP$  from \eqref{eqP} is
\[  \oP = \begin{pmatrix}
         1 & -1 & 0 \\
        1 & -1 & 1 
    \end{pmatrix}\, 
.\]
We consider the simplified reduced matrix
\[P = \begin{pmatrix} 
 1 & 0 \\
 1 & 1 \end{pmatrix}
 \]
obtained from the column partition $\alpha$ with blocks $\{1,2\},\{3\}$ and 
the row partition being the singleton partition; hence $s = 2$, $\ell=2$, $m=4$. 
We now  use \Cref{thm:mono} to show  that $F$ does not admit multiple positive zeros. That is, we show that $\mC_{\mS}^\sigma=\varnothing$ for all orientations $\sigma\in \{-1,0,1\}^{2\times 2}$ compatible with $\alpha$ and all feasible sign matrices $\mS$. 

As $\mathcal{U}_\alpha=\{2\}$, 
an orientation is compatible with $\alpha$ if and only if 
$\sigma_{1,2}=\sigma_{2,2}=1$. 
The first row of $P^\sigma$ has one nonzero entry equal to $\sigma_{1,1}$, and hence $P^\sigma \mu>0$ cannot hold for $\mu>0$ unless $\sigma_{1,1}> 0$. Hence, if $\sigma_{1,1}\leq 0$, $\mC_{\mS}^\sigma=\varnothing$ for any sign matrix $\mS$ as $\Gamma_J=\varnothing$ for all  sets $J$. We are left to considering the following orientations compatible with $\alpha$: 
\[ \begin{pmatrix}
    1 & 1 \\  1 & 1 
\end{pmatrix}\,,  \quad   \begin{pmatrix}
    1 & 1 \\  0 & 1 
\end{pmatrix}\, , \quad \begin{pmatrix}
    1 & 1 \\  -1 & 1 
\end{pmatrix}\, .     \]
For these three orientations   we have
\[ A_\rho^\sigma = 
\begin{pmatrix}  
\sigma_{1,1} e^{\rho_1} - \sigma_{2,1} e^{\rho_3} &  0 \\ 
\sigma_{1,1} e^{\rho_2} - \sigma_{2,1}e^{\rho_3}  &  e^{\rho_2} - e^{\rho_4}
\end{pmatrix}  \, . \]
For the second and third orientations, $\sign(A_\rho^\sigma)$ is not feasible, as the first row has exactly one nonzero entry. Hence, for any such  orientation, $\mC_{\mS}^\sigma=\varnothing$ for all feasible sign matrices $\mS$.

 All that is left is to consider the  first orientation, namely $\sigma=\sigma_+$, giving
 \[ A_{\rho}^{\sigma} = \begin{pmatrix}
    e^{\rho_1} - e^{\rho_3} & 0 \\
    e^{\rho_2} - e^{\rho_3} & e^{\rho_2} - e^{\rho_4}
 \end{pmatrix}\,. \]
Using that $M^\top(\delta)=(0,-\delta_1,\delta_1+\delta_2,2\delta_1+\delta_2,2\delta_1+\delta_2)$ and \Cref{def:mGsigma},  
it holds that $\rho \in \mG^{\sigma}$ if and only if there exists a nonzero $\delta \in \R^2$ and $z_+,z_-$ such that
 \begin{equation}
 \begin{aligned}
     \label{eq:above}
         &\rho_1 = 0   \qquad \rho_2 = -\delta_1 \qquad z_+ = \delta_1 + \delta_2 \qquad z_- = 2\delta_1 + \delta_2 \qquad \rho_4 = 2\delta_1 + \delta_2 \\
         &\sign(z_+-z_-)=\sign(\rho_3-z_+)=\sign(\rho_3-z_-)\,.
     \end{aligned}
     \end{equation}

If $\mS$ is feasible and $\sign(A_{\rho}^{\sigma})=\mS$, then necessarily the first row of $\mS$ is identically zero. This leaves us with the following three feasible sign matrices to consider:
\[ \mS_1 = \begin{pmatrix}
     0 & 0 \\
     0 & 0
 \end{pmatrix} \qquad \mS_2 = \begin{pmatrix}
     0 & 0 \\
     1 & - 1
 \end{pmatrix} \qquad \mS_3 = \begin{pmatrix}
     0 & 0 \\
     -1 & 1
 \end{pmatrix}\, . \]
For each of these matrices, the condition $\sign(A_{\rho}^{\sigma}) = \mS$ is incompatible with \eqref{eq:above}:
\begin{itemize}
    \item For $\mS=\mS_1$, the condition gives $\rho_1=\rho_2=\rho_3=\rho_4$, which cannot hold  together with \eqref{eq:above} for a nonzero $\delta$. 
    \item For $\mS=\mS_2$, the condition gives $\rho_1 = \rho_3 < \rho_2 < \rho_4$, 
which combined with \eqref{eq:above} implies
$0=\rho_3$, $0<\rho_2 = z_+-z_-$, and $0<\rho_4=z_-$. This contradicts that the sign of $z_+-z_-$ and of $\rho_3-z_-=-z_-$ agree. 
\item For $\mS=\mS_3$ it holds similarly  that
 the constraint $\rho_1 = \rho_3 > \rho_2 > \rho_4$ and \eqref{eq:above} cannot both hold. 
\end{itemize}
 Hence $\mC_{\mS}^\sigma=\varnothing$ for $\mS=\mS_1,\mS_2,\mS_3$.
We have thus shown that $\mC_{\mS}^\sigma=\varnothing$ for all orientations $\sigma$ compatible with $\alpha$ and feasible sign matrices $\mS$, and we conclude from  \Cref{thm:mono}   that $F$ does not admit multiple positive zeros.
\end{example}

\begin{example}\label{example:hybrid_histidine_kinase}
We illustrate \Cref{thm:full} with an augmented vertical system that arises from the study of the steady states of a reaction network named 
\emph{hybrid histidine kinase}. This network is known to admit multiple positive steady states \cite{hhk, plos:identifying}. The stoichiometric matrix, the matrix of reactants, and a matrix defining the stoichiometric compatibility classes are
{\small \[ N = \begin{pmatrix}
    -1 & 0 & 0 & 1 & 0 & 0 \\
    1 & -1 & 0 & 0 & 1 & 0 \\
    0 & 1 & -1 & -1 & 0 & 0 \\
   0 & 0 & 1 & 0 & -1 & 0 \\
   0 & 0 & 0 & -1 & -1 & 1 \\
   0 & 0 & 0 & 1 & 1 & -1 \\
\end{pmatrix}, \quad M = \begin{pmatrix} 
1 & 0 & 0 & 0 & 0 & 0 \\
0 & 1 & 0 & 0 & 0 & 0 \\
0 & 0 & 1 & 1 & 0 & 0 \\
0 & 0 & 0 & 0 & 1 & 0 \\
0 & 0 & 0 & 1 & 1 & 0 \\
0 & 0 & 0 & 0 & 0 & 1\end{pmatrix}   \quad L = \begin{pmatrix} 
1 & 1 & 1 & 1 & 0 & 0 \\
0 & 0 & 0 & 0 & 1 & 1\end{pmatrix}.\] }%
After performing Gaussian elimination to $N$, the steady states of the network are the zeros of the augmented vertical system 
$F = (C(a \star x^M), Lx - b) \in \R[a,b,x]^6$
with $C$ and  matrix $A_{\bar{\rho}}$ from \eqref{eq:characteristic_matrix} for $\bar{\rho}\in \R^6$ given as
 \[ C  = \begin{pmatrix} 
1 & 0 & 0 & 0 & 1 & -1 \\
0 & 1 & 0 & 0 & 0 & -1 \\
0 & 0 & 1 & 0 & -1 & 0 \\
0 & 0 & 0 & 1 & 1 & -1 \end{pmatrix}\,,\qquad A_{\bar{\rho}} = \begin{pmatrix} 
-e^{\bar{\rho}_1} + e^{\bar{\rho}_5} & e^{\bar{\rho}_1} - e^{\bar{\rho}_6} \\
0 & e^{\bar{\rho}_2} - e^{\bar{\rho}_6} \\
e^{\bar{\rho}_3} - e^{\bar{\rho}_5} & 0 \\
-e^{\bar{\rho}_4} + e^{\bar{\rho}_5} & e^{\bar{\rho}_4} - e^{\bar{\rho}_6} 
\end{pmatrix}  .\]
The last two columns of $C$ form the matrix $-\oP \in \R^{4 \times 2}$ from \eqref{eqP}. In this example we have $\bs=4$,  $\bar{m}=6$. We consider the row and column partitions to be $[\bs] = \{1,4\} \sqcup \{2\} \sqcup \{3\}$ and $[\bl] = \{1\} \sqcup \{2\}$, so $s = 3$,  $\ell=2$, $m=5$. As the column partition consists of singletons, $\sigma_+$ is the only compatible orientation. We obtain the following simplified reduced matrix $P$,  matrix $A^{\sigma_+}_\rho$ for $\rho\in \R^5$, and associated graph $G_{P}$:

\begin{minipage}[h]{0.7\textwidth}
\[P = \begin{pmatrix}
-1 & 1 \\
0 & 1 \\
1 & 0 \\ \end{pmatrix}\,, \qquad A^{\sigma_+}_{\rho} = \begin{pmatrix} 
-e^{\rho_1} + e^{\rho_4} & e^{\rho_1} - e^{\rho_5} \\
0 & e^{\rho_2} - e^{\rho_5} \\
e^{\rho_3} - e^{\rho_4} & 0\end{pmatrix}\, ,   \]
\end{minipage}\quad
\begin{minipage}[h]{0.25\textwidth}
\begin{tikzpicture}[baseline=(current bounding box.center), scale=0.7]
    \foreach \i/\y in {1/0, 2/-1.5, 3/-3} {
        \node[draw, circle, fill=red!20, minimum size=5mm] (left\i) at (0, \y) {\i};
    }
    
    \foreach \j/\y in {1/-1, 2/-2.6} {
        \node[draw, circle, fill=blue!20, minimum size=5mm] (right\j) at (3, \y) {\j};
    }
    
    \draw[thick] (left1) -- (right1);
    \draw[thick] (left1) -- (right2);
    \draw[thick] (left2) -- (right2);
    \draw[thick] (left3) -- (right1);
\end{tikzpicture}
\end{minipage} 

\smallskip
As $G_{P}$ is acyclic, $P$ induces a forest and 
\Cref{thm:full} completely characterizes whether $F$ admits multiple positive zeros. 

We study now the sets $\mathcal{C}_{\mathcal{S}}^{\sigma_+}$ from  \Cref{def:feasible_ground_set} for different signs matrices $\mS$ to determine whether at least one of them is nonempty. Observe that $\sign(A_\rho^{\sigma_+})$ is feasible only when the bottom two rows of $A_\rho^{\sigma_+}$ are zero. 
The condition
$\rho \in \mathcal{G}^{\sigma_+}$, see \Cref{rk:singleton}, requires that for some $\delta \in \mathcal{O}_{L}$ it holds 
    \begin{equation}\label{eq:rho_delta_example}
        \rho_1 = \delta_1 = \delta_3 + \delta_5\,, \quad \rho_2 = \delta_2\,, \quad \rho_3 = \delta_3\,,\quad    \rho_4 = \delta_4 + \delta_5\,,\quad\rho_5 = \delta_6\,.
    \end{equation}

We start by considering the feasible sign matrix 
\[\mS = \begin{pmatrix}
    0 & 0 \\
    0 & 0 \\
    0 & 0
\end{pmatrix}\, .\]
The condition $\sign(A_\rho^{\sigma_+}) =\mS$ give that all entries of $\rho$ are equal and in particular $\rho_1= \rho_3$. Using \eqref{eq:rho_delta_example}, we obtain that $\delta_5=0$, which  contradicts  the fact that $\delta\in \mO_L$. 
Hence $\mC_{\mS}^{\sigma_+} = \varnothing$.

We consider now the feasible sign matrix
\[\mathcal{S} = \begin{pmatrix} 
-1 & 1 \\
0 & 0  \\
0 & 0 \end{pmatrix}\, . \] 
We easily find from \Cref{def:Subsets,def:ID} that $\Lambda_1^{++}=\{2\}$, $\Lambda_1^{--}=\{1\}$, $I^+=\{1\}$, and $I^-=\varnothing$.
As $\mS_{1j}\neq 0$ for $j=1,2$, it holds $\Gamma_{I^+} = \varnothing$  and hence the only subset  to consider for $\mathcal{C}_{\mathcal{S}}^{\sigma_+}$ is $J=\varnothing$. 
This gives that $\rho\in \mathcal{C}_{\mathcal{S}}^{\sigma_+}$ if and only if  \eqref{eq:rho_delta_example} and 
the following statements hold:
\begin{itemize}
    \item $\sign(A_\rho^{\sigma_+}) =\mS$, that is, 
    \[ \rho_1 >\rho_4\,,\quad\rho_1> \rho_5 \,,\quad \rho_2 = \rho_5\,,\quad \rho_3 = \rho_4\,.\] 
      \item $\varnothing=\{i \in I^+\cup I^-: \rho\in \mathcal{D}_i\}$, that is, $\rho\notin \mathcal{D}_1$: 
    \[\rho_5 = \max_{j \in \Lambda_1^{++}}\rho_{3 + j} >  \min_{j \in \Lambda_1^{--}}\rho_{3 + j} = \rho_4 \, . \]
    \end{itemize}

These conditions allow us to express  $\mathcal{C}_{\mathcal{S}}^{\sigma_+}$ as
\begin{multline*}
\mathcal{C}_{\mathcal{S}}^{\sigma_+} = \{\rho \in \mathbb{R}^5 \colon  \text{exists } \delta \in \mathcal{O}_{L} \text{ such that } \rho_1 > \rho_5 > \rho_4, \rho_2 = \rho_5, \rho_3 = \rho_4, \\ \rho_1 = \delta_1 = \delta_3 + \delta_5, \: \rho_2 = \delta_2, \: \rho_3 = \delta_3,   \rho_4 = \delta_4 + \delta_5, \: \rho_5 = \delta_6 \}\,.
\end{multline*}
and hence $\mathcal{C}_{\mathcal{S}}^{\sigma_+}\neq \varnothing$ if and only if 
there exists $\delta \in \mathcal{O}_{L}$ such that
\[  \delta_3 + \delta_5 = \delta_1  >  \delta_6 = \delta_2 > \delta_4 + \delta_5= \delta_3 \, .  \]
These relations hold for 
\[ \delta \coloneqq (\ln 2,\ln \tfrac{1}{2},\ln \tfrac{2}{5},\ln \tfrac{2}{25},\ln 5,\ln \tfrac{1}{2})\, ,  \] 
and furthermore $\delta\in \mO_L$ as
 $\sign(\delta)=\sign(v)$
with $v= (3,-1,-1 ,-1,1,-1) \in \ker(L)$. 
Hence, $\mC_{\mS}^{\sigma_+}\neq \varnothing$ and by \Cref{thm:full}, $F$ admits multiple positive zeros.

 We now follow \Cref{rk:witness} to find parameters $(a,b)$ and a pair of distinct positive zeros $x,y$ of $F_{a,b}$ for the chosen 
 $\rho\in \mathcal{C}_{\mathcal{S}}^{\sigma_+}$
 and the chosen $\delta$.  We find that 
 \[\bar{\rho}=M^\top(\delta)=(\ln 2,\ln \tfrac{1}{2},\ln \tfrac{2}{5},\ln 2,\ln \tfrac{2}{5},\ln \tfrac{1}{2})\] and  that $\mu= (\tfrac{3}{2},\tfrac{8}{5})$ is a solution to the  characteristic system
\begin{align*}
 0 &=  - \tfrac{8}{5} \mu_1 + \tfrac{3}{2}\mu_2=  (-e^{\bar{\rho}_1} + e^{\bar{\rho}_5})\mu_1 + (e^{\bar{\rho}_1} - e^{\bar{\rho}_6})\mu_2  = (-e^{\bar{\rho}_4} + e^{\bar{\rho}_5})\mu_1 + (e^{\bar{\rho}_4} - e^{\bar{\rho}_6})\mu_2  \, , \\
 0 & < -\mu_1 + \mu_2 \, , \qquad 0<\mu_1\, , \qquad 0<\mu_2\, .
\end{align*}
Hence  $(a,x,y) = \Phi(v,\delta,\mu) \in \mM_F$: 
\[ 
a = \big(\tfrac{1}{30},\tfrac{4}{5}, \tfrac{9}{10},\tfrac{6}{25},\tfrac{138}{25}, \tfrac{4}{5}\big)\, , \qquad x = \big(6,1,\tfrac{2}{3},\tfrac{2}{23},\tfrac{5}{4},1\big)\, , \qquad y = \big(3,2,\tfrac{5}{3},\tfrac{25}{23},\tfrac{1}{4},2\big)\, . \]
 By letting $b = Lx=(\tfrac{535}{69},\tfrac{9}{4}) \in \mathbb{R}^2$, we obtain that $x,y$ are distinct positive zeros of $F_{a,b}$. Other choices of $v \in \ker(L)$ with $\sign(v)=\sign(\delta)$ may lead to different parameters and pairs of distinct zeros. 
\end{example}

 \begin{example}
Consider  the  vertical system 
\[F =(a_1x_1x_2-a_3x_2 - a_4x_1^2 + a_5x_2, \ a_2x_1x_2 - a_4x_1^2-a_6x_2^2)\] in $\R[a,x]^2$ defined by the matrices
\[C  = \begin{pmatrix} 
1 & 0 & -1 & -1 & 1 & 0 \\
0 & 1 & 0 & -1 & 0 & -1 
\end{pmatrix}\, \qquad
M = \begin{pmatrix}
1 & 1 & 0 & 2 & 0 & 0   \\
1 & 1 & 1 & 0 & 1 & 2 
\end{pmatrix} \, \]
and where $n=2$, $\bs=2$,  and $\bar{m}=6$. The reduced matrix $\oP$ from \eqref{eqP}
is 
\[ \oP=\begin{pmatrix} 
 1 & 1 & -1 & 0 \\
 0 & 1 & 0 & 1 
\end{pmatrix}\,, \]
which induces a forest. We now apply \Cref{cor:full}(ii) to show that $F$ admits multiple positive zeros. To this end, we need to find a feasible sign matrix $\mS$ and a nonzero $\delta\in \R^6$ such that $\rho= M^\top(\delta)$ satisfies $\sign(A_\rho)=\mS$ and 
$\Gamma_{J} \neq \varnothing$ for 
 $J=\{i \in I^+\cup I^-: \rho\in \mathcal{D}_i\}$. 

We have $\rho= M^\top(\delta)=(\delta_1+\delta_2,\delta_1+\delta_2, \delta_2,2\delta_1,\delta_2,2\delta_2)$ and  
\[ A_{\rho} = \begin{pmatrix} 
 e^{\rho_1} - e^{\rho_3} &  e^{\rho_1} - e^{\rho_4}  &  -e^{\rho_1} + e^{\rho_5} &  0 \\
 0 & e^{\rho_2} - e^{\rho_4}  & 0 & e^{\rho_2} - e^{\rho_6}  
 \end{pmatrix}\, . 
    \]
For the feasible sign matrix 
\[ \mS = \begin{pmatrix}
1 & 0 & -1 & 0 \\ 0 & 0 & 0 & 0     
\end{pmatrix}\]
we have the relations
\[ \rho_1=\rho_2=\rho_4=\rho_6\,, \quad \rho_1>\rho_3\,,\quad \rho_1>\rho_5 \,, \]
which are satisfied with $\delta= (\tfrac{1}{2},\tfrac{1}{2})$. 
For this $\mS$, we have $I^+=\{1\}$ and $I^-=\varnothing$, and the set $\mD_1$ is defined by the inequality $\rho_3\leq \rho_5$, which holds for all $\rho\in M^\top(\R^6)$.   Taking $J=\{1\}$, it is easy  verified that $\Gamma_J\neq \varnothing$. 

We conclude that $\mS$ and $\delta= (\tfrac{1}{2},\tfrac{1}{2})$  satisfy the conditions of \Cref{cor:full}(ii), and hence $F$ admits multiple positive zeros. 
\end{example}

\section{Computational considerations}\label{sec:comput}

As illustrated by the examples in the previous section, to 
decide whether the feasible ground set $\mathcal{C}^{\sigma}_{\mS}$ is nonempty  we can study whether a finite collection of sets, defined in terms of linear equalities and inequalities  is nonempty. Fix row and columns partitions $\tau, \alpha$ respectively, and consider a feasible sign matrix $\mS$ and an orientation $\sigma$ compatible with the column partition. 
By \Cref{def:ID}, the sets $I^+,I^-$ are easily computed from $P,\sigma,\mS$.
 \Cref{def:feasible_ground_set} tells us that $ \mathcal{C}_{\mS}^\sigma \neq \varnothing$ if and only if there exists  $J \subseteq I^+\cup I^-$ such that 
\[ \Gamma_J\neq \varnothing \quad \text{ and }\quad  \mathcal{R}_J\coloneqq \Big\{\rho \in \mathcal{G}^{\sigma} : \sign(A_{\rho}^\sigma) = \mathcal{S},\  J=\{i \in I^+\cup I^-: \rho\in \mathcal{D}_i\} \Big\} \neq \varnothing \, .  \]

We start by noting that for every pair of real numbers $\nu,\nu' \in \R$, the inequalities of the form $-e^{\nu} < e^{\nu'}$   always hold, while those of the form $e^{\nu} < -e^{\nu'}$ are never true.  The remaining case 
$e^{\nu} < e^{\nu'}$ is equivalent to $\nu < \nu'$. 
In this way, all inequalities comparing two exponentials can be expressed as linear inequalities. 

To decide whether $\mC_\mS^\sigma\neq \varnothing$ we consider iteratively  orthants $\mO\subseteq \mO_L$ and apply the following construction. 
Let $t\coloneqq \ell - \# \mathcal{U}_\alpha$ and consider the set $\mathcal{B}_{\mO}$ of all $(\rho,\delta,z_+,z_-,\mu) \in \R^m\times \mO \times \R^{t} \times \R^{t} \times \R^\ell_{>0}$ satisfying the following constraints:

\begin{enumerate}[label=(\roman*)]
    \item  Relations to impose $\rho \in \mG^\sigma$ with $\bar{\rho}=M^\top(\delta)$: 
    \begin{itemize}
    \item $\delta \in \mO$ (described by a set of linear equalities and inequalities);
    \item for all $k \in [s]$,
\begin{equation*}\label{eq:rho_delta} \rho_k = M^{\top}(\delta)_i  \quad\text{for all}  \ i\in \tau_k\,,
\end{equation*}
which give linear equalities;
\item for  all $k \in \mathcal{U}_{\alpha}$, \eqref{eq:conditionU1} holds if either 
\[ M^{\top}(\delta)_{\bar{s} + j} = \rho_{s+k}\, \quad \text{ for all }j \in \alpha_k \, , \]
or there  exists  $j_1,j_2 \in \alpha_k$ such that
\[ M^{\top}(\delta)_{\bar{s} + j_1} < \rho_{s+k} < M^{\top}(\delta)_{\bar{s} + j_2}\, . \] 
\item for all $k \in [\ell] \setminus  \mathcal{U}_{\alpha}$, we derive similar equations for $z_+$ and $z_-$ from \eqref{eq:Vsigns}.
\end{itemize}
\item $\sign(A_{\rho}^{\sigma})= \mS$ gives rise to the equalities 
\[ \sign(\sigma_{1j}e^{\rho_i} - \sigma_{2j}e^{\rho_{s + j}}) = \mS_{ij} \qquad i\in [s], \ j\in [\ell]\,,  \] 
which in turn are expressed as inequalities or equalities in the entries of $\rho$. 
\item For each $J \subseteq I^+\cup I^-$, the condition $\Gamma_J\neq \varnothing$ gives rise to linear inequalities in $\mu$. To   require $J=\{i \in I^+\cup I^-: \rho\in \mathcal{D}_i\}$, we impose $i \in J$ if and only if $\rho \in \mathcal{D}_i$. So, for $i\in J$ we require
\[ \frac{\sigma_{2j_1}}{\sigma_{1j_1}}\, e^{\rho_{s + j_1}} \leq \frac{\sigma_{2j_2}}{\sigma_{1j_2}}\, e^{\rho_{s + j_2}}\]
for all pairs of indices  $(j_1,j_2)\in \Lambda_i^{++}\times \Lambda_i^{--}$ if $i\in I^+$, and analogous  for $I^-$.  For $i\notin J$, we ask for the existence of such a pair for which the inequality does not hold. 
\end{enumerate}

Each $\mathcal{B}_\mO$ is a union of sets defined by linear equalities and inequalities, and hence is a union of relative interiors of    polyhedral cones. 
The set $\mC_\mS^\sigma$ is obtained by considering the linear projection 
of each $\mathcal{B}_\mO$ onto the first component $\R^m$ for all  orthants $\mO\subseteq \mO_L$. 
Hence,  $\mC_\mS^\sigma\neq \varnothing$ if and only if $\mathcal{B}_\mO\neq \varnothing$ for some orthant $\mO\subseteq \mO_L$, and this  can be decided using linear programming.
This approach gives the relevant $\delta$, useful for the construction of witnesses of multiple positive zeros, see \Cref{rk:witness}.

Note that as a linear projection of a polyhedral cone is a polyhedral cone \cite[Theorem 19.3]{rockafellar-1970a},  $\mC_\mS^\sigma$  also is a finite union of  relative interiors of    polyhedral cones in $\R^{m}$.

\begin{remark}\label{rk:opposite}
  A computational simplification arises from the observation  that only ``half'' of the sign matrices $\mS$ need to be considered when applying \Cref{thm:mono,thm:full}, as $\mC_{\mS}^\sigma =\varnothing$ if and only if $\mC_{-\mS}^\sigma =\varnothing$. Explicitly, there is a bijection
\[ \mC_{\mS}^\sigma \rightarrow \mC_{-\mS}^\sigma \qquad \rho \mapsto -\rho \, .\]

Additionally, if 
 $(v,\delta,\mu)\in \overline{\mE}_F$, then $(a,x,y)\coloneqq \Phi(v,\delta,\mu) \in \mM_F$ and  
 from \Cref{thm:first_characterization} follows that $v=x-y$ and $\delta=\ln x - \ln y$. 
 As also  $(a,y,x) \in \mM_F$,  we have $\Psi(a,y,x)  = (y-x, \ln y - \ln x, \mu')= 
  (-v, -\delta, \mu') \in \overline{\mE}_F$. Hence   $ (-v, -\delta, \mu') \in \overline{\mE}_F$ gives rise to  $(a,y,x)\in \mM_F$. It follows that when finding witnesses of multiple positive zeros as in \Cref{rk:witness}, we do not miss relevant parameter values when considering only one matrix of each pair $\{\mS,-\mS\}$. 
\end{remark}

\begin{remark}
As illustrated by \Cref{example:hybrid_histidine_kinase},  witnesses of multiple positive zeros have rational entries if $\rho$ and $\delta$   can be chosen with entries in $\ln(\mathbb{Q}_{>0})$. 
This is   possible provided $C$ is a rational matrix (as occurs in the application to reaction networks or critical points of polynomials), as any rational polyhedral cone $\mC$ contains points in its relative interior of this form: take generators $\nu_1,\dots,\nu_r$ with rational entries and 
rational $u_1,\dots,u_r > 1$. Then $\sum_{i=1}^r \ln(u_i) \nu_i$ belongs to the relative interior of $\mC$ and has  entries in $\ln(\Q_{>0})$.  
\end{remark}

\section{Additional properties}
\label{section:properties}
We conclude this work with the discussion of some additional properties that can be explored using the set $\overline{\mE}_F$ from \eqref{eq:Ebar} that encodes multiple positive zeros.

\subsection{Connectivity of the parameter region of multiple positive zeros}
Deciding whether the parameter region where the system has  multiple positive zeros has been a problem of recent interest in chemical reaction network theory \cite{plos:identifying,kaihnsatelek,shiu-connectivity}. 
If  we let $\pi\colon \R^{m}_{>0} \times \R^{n}_{>0} \times \R^{n}_{>0} \rightarrow  \R^{m}_{>0}$ denote  the projection onto the first factor, then the set 
\[
    \mathcal{A}_F \coloneqq \pi(\mathcal{M}_F)
\]
consists of all parameter values $a$ for which $F_{a,b}$ has multiple positive zeros for some $b \in \R^{d-s}$. 
The following proposition gives a sufficient condition for $\mathcal{A}_F$ to be connected. 
In order to state it, given an orthant $\mO\subseteq \mO_L$ and sign matrix 
$\mS \in \{-1,0,1\}^{\bar{s} \times \bar{\ell}}$, we define the following subset of $\overline{\mE}_{F}$:
\begin{equation}\label{eq:EFSO} \overline{\mE}_{F,\mS,\mO}  \coloneqq \{(v,\delta,\mu)\in \overline{\mE}_F: \sign(A_{M^\top(\delta)})= \mS,\ \delta\in \mO\}\,. 
\end{equation}
We consider also the orthant $-\mO\coloneqq\{-x : x\in \mO\}$. 

\begin{proposition}
\label{prop:connectivity}
Let $F \in \R[a,b,x^\pm]^d$ be an augmented vertical  system defined by a principal matrix.
Let 
$\mS \in \{-1,0,1\}^{\bar{s} \times \bar{\ell}}$ and $\mO\subseteq \mO_L$ an orthant. Assume that:
\begin{enumerate}[label=(\roman*)]
    \item $\overline{\mE}_F=\overline{\mE}_{F,\mS,\mO} \cup \overline{\mE}_{F,-\mS,-\mO}$, and  
    \item the set $\overline{\mE}_{F,\mS,\mO}$ is connected.   
\end{enumerate}
Then $\mathcal{A}_F$ is connected.
\end{proposition}
\begin{proof}
By \Cref{thm:first_characterization}, $\mM_F=\Phi(\overline{\mE}_F)$ with $\Phi$ as   in \eqref{eq:Phi}. \Cref{rk:opposite} together with the assumption (ii)   imply that $\mM_F=\Phi(\overline{\mE}_{F,\mS,\mO})$.
 When restricted to this set,  $\Phi$ 
 is continuous as it is constantly equal to $1$ when the denominators of $x(v,\delta),y(v,\delta)$ could vanish. As $\overline{\mE}_{F,\mS,\mO}$ is connected by assumption (ii), so is $\mM_F$, and hence $\mathcal{A}_F$ is connected as well.
\end{proof}

\subsection{Nondegeneracy}
For a square augmented vertical   system $F$, that is $d=n$, 
a zero $x$ of $F_{a,b}$  is \term{nondegenerate}
if the Jacobian $J_{F_{a,b}}(x)$ is nonsingular. A standard computation using that $x\in \R^n_{>0}$ gives that $x$ is nondegenerate if and only if 
\begin{equation}\label{eq:jac}
    \det \begin{pmatrix}
        C\diag(a \star x^M)M^{\top}  \\
        L \diag(x)
    \end{pmatrix} \neq 0\,.
\end{equation} 

Nondegeneracy of the zeros is a relevant  property of augmented vertical  systems, which plays an important role when inferring the existence of multiple positive zeros from smaller subsystems, e.g. \cite{craciun-entrapped,feliu:intermediates,joshishiu,banajipanteaMPNE,cappelletti:flow,banaji:boros:hofbauer:2024b}.
In particular, one wants to determine whether $F$ admits multiple nondegenerate zeros, meaning that  there exists a choice of parameters $(a,b)$ such that $F_{a,b}$ has at least two nondegenerate distinct zeros in $\R^n_{>0}$. 
This is not guaranteed by the existence of multiple positive zeros, as the next example illustrates.
 
\begin{example}
Consider the square augmented vertical   system $F=(a_1 x_1 -a_2x_1+a_3x_1x_2,x_2-b)$ with defining matrices
\[ C = \begin{pmatrix}
    1 & -1 & 1
\end{pmatrix},  \qquad M = \begin{pmatrix}
   1 & 1 & 1 \\
   0 & 0 & 1 
\end{pmatrix}, \qquad  L = \begin{pmatrix}
    0 &  1
\end{pmatrix} \, ,  \] 
 where $n=2$, $m=3$, $s=1$, and $\ell=2$, which arises from 
 the reaction network from \cite[Example 2.1]{Kaihnsa_Nguyen_Shiu_2024}:
 \[ 0 \ce{<-[$a_1$]} X_1 \ce{->[$a_2$]}  2X_1 \qquad X_1 + X_2 \ce{->[$a_3$]} X_2\,. \] 
An easy computation gives that for $a,b>0$, 
$F$ has  no positive zero if $a_1-a_2+a_3b\neq 0$ and infinitely many, all degenerate, if $a_1-a_2+a_3b= 0$.
Hence $F$ admits  multiple positive zeros but not multiple nondegenerate positive zeros.

For illustration, we can see that $F$ admits multiple positive zeros using   \Cref{cor:full}. We have $\mO_L=(\R\setminus \{0\}) \times \{0\}$
and the reduced matrix  is 
\[ \bar{P}=  \begin{pmatrix}
   1 & -1 
\end{pmatrix}\,,  \] 
which induces a forest.
For $\mS = \begin{pmatrix}
    0 & 0
\end{pmatrix}$, we have $I^+\cup I^-=\varnothing$ and
\[ \mC^{\sigma_+}_{\mS} = \big\{ \rho \in \R^3 \colon  \rho_1 = \rho_2 = \rho_3 = \delta, \ \delta\in \R\setminus \{0\} \big\} \neq \varnothing\,.
\]
\cref{cor:full} tells us that the system admits multiple positive zeros. 
\end{example}

A natural question to ask  is whether the existence of parameter values and pairs of distinct nondegenerate zeros can be inferred  from our setup. 
 Recall the matrix $\widehat{P}$ from \eqref{eqP} whose columns form a basis of $\ker(C)$. 
For an orthant $\mO \subset \mO_L$ and $\mathcal{S} \in \{-1,0,1\}^{\bar{s} \times \bar{\ell}}$ recall the set $\overline{\mE}_{F,\mS,\mO}$ from \eqref{eq:EFSO} and the functions $x,y$ defining  $\Phi$ in \eqref{eq:Phi}. We consider the map $\Pi_{\mS,\mO}\colon \overline{\mE}_{F,\mS,\mO} \rightarrow \R$ defined by
\begin{equation}\label{eq:Pi} 
\Pi_{\mS,\mO}(v,\delta,\mu) =\det\begin{pmatrix}
        C\diag( (\widehat{P}\mu) \star e^{M^{\top}(\delta)})M^{\top} \\
        L\diag(x(v,\delta))
    \end{pmatrix} \det\begin{pmatrix}
        C\diag(\widehat{P}\mu)M^{\top} \\
        L\diag(y(v,\delta))
    \end{pmatrix}\, . 
    \end{equation}

\begin{theorem}\label{thm:nondegeneracy}
Let $F =(C(a \star x^M),\; Lx - b)  \in \mathbb{R}[a, b, x^{\pm}]^n$ be a square augmented vertical system defined by a  principal matrix.
The following statements are equivalent:
\begin{enumerate}[label=(\roman*)]
    \item $F$ admits multiple nondegenerate positive zeros.
    \item There exist a   sign matrix $\mathcal{S} \in \{-1, 0, 1\}^{\bar{s} \times \bar{\ell}}$ and an  orthant $\mathcal{O} \subset \mathcal{O}_L$  such that $\Pi_{\mathcal{S},\mathcal{O}}$ takes a nonzero value. 
\item There exist a   sign matrix $\mathcal{S} \in \{-1, 0, 1\}^{\bar{s} \times \bar{\ell}}$ and an  orthant $\mathcal{O} \subset \mathcal{O}_L$  such that $\Pi_{\mathcal{S},\mathcal{O}}$ takes nonzero values in an Euclidean open dense subset of a connected component of $\overline{\mE}_{F,\mS,\mO}$. 
\end{enumerate}
\end{theorem}

\begin{proof}
Given $(v,\delta,\mu)\in \overline{\mE}_{F}$, 
$x(v,\delta)$ and $y(v,\delta)$ give two positive zeros for the parameter $a=a(v,\delta,\mu)$ and some $b$. The second determinant in \eqref{eq:Pi} is nonzero if $y(v,\delta)$ is nondegenerate by \eqref{eq:jac}, as $a(v,\delta,\mu)\star y(v,\delta)^M = \widehat{P}\mu$ by definition of $\Phi$. The first determinant is nonzero when $x(v,\delta)$ is nondegenerate, as 
\[ a(v,\delta,\mu)\star x(v,\delta)^M  =  (\widehat{P}\mu)\star \left(\frac{x(v,\delta)}{y(v,\delta)}\right)^M  = (\widehat{P}\mu)\star (e^\delta)^M= (\widehat{P}\mu) \star e^{M^{\top}(\delta)}\, .\]
Using the bijection between $\mM_F$ and $\overline{\mE}_{F}$ from \Cref{thm:first_characterization}, we obtain 
  the equivalence between (i) and (ii). 

  \smallskip
Now, note that  $\overline{\mE}_{F,\mS,\mO}$ is an open subset of a real analytic variety \cite[Chapter 6.5]{krantz}, as it is defined by equalities and inequalities of analytic functions  (polynomials and exponentials).  Furthermore,  $\Pi_{\mS,\mO}$ is analytic   as it is defined by quotients of polynomials and exponentials, with nonvanishing denominators. 
The identity theorem on a real analytic varieties \cite[Chapter B, §4]{lojasiewicz} tells us that if $\Pi_{\mS,\mO}$ is identically zero on an open subset of $\overline{\mE}_{F,\mS,\mO}$, then it must be zero in a connected component of $\overline{\mE}_{F,\mS,\mO}$. 
The equivalence between (ii) and (iii) now follows from this and the fact that 
the set of points $\mathcal{U} \subset \overline{\mE}_{F,\mS,\mO}$ where $\Pi_{\mS,\mO}$ takes nonzero values is open. 
 \end{proof}

\def\bibfont{\small}
 \bibliographystyle{alpha}
\newcommand{\etalchar}[1]{$^{#1}$}

\Addresses

\end{document}